\let\OLDthebibliography\thebibliography
\renewcommand\thebibliography[1]{
	\OLDthebibliography{#1}
	\setlength{\parskip}{0pt}
	\setlength{\itemsep}{0pt plus 0.3ex}
}
\newtheorem{Theorem}{Theorem}
\newtheorem{Claim}{Claim}
\newtheorem{Corollary}{Corollary}
\newtheorem{Lemma}{Lemma}
\newtheorem{Problem}{Problem}
\newtheorem{Proposition}{Proposition}
\newcommand{\R}{{\mathbb R}}
\newcommand{\Z}{{\mathbb Z}}
\newcommand{\N}{{\mathbb N}}
\newcommand{\x}{{\mathbf x}}
\newcommand{\y}{{\mathbf y}}
\newcommand{\B}{{\mathcal B}}
\newcommand{\M}{{\mathcal M}}
\newcommand{\snake}{\text{\Large $\mathfrak{s}$}}
\newcommand{\Snake}{\widehat{\text{\Large $\mathfrak{s}$}}}
\newcommand{\e}{\mathbf e}
\newcommand{\one}{\boldsymbol{1}}
\begin{document}

\date{}
\title{Two-colorings of normed spaces without \\ long monochromatic unit arithmetic progressions}
\author{
	Valeriya Kirova\thanks{MSU, Moscow, Russia. Email:~\href{mailto:kirova_vo@mail.ru}{\tt kirova\textunderscore vo@mail.ru}.}
	\and  
	Arsenii Sagdeev\thanks{MIPT, Moscow, Russia and Alfréd Rényi Institute of Mathematics, Budapest, Hungary. Supported in part by the \href{https://rscf.ru/project/21-71-10092/}{RSF grant N 21-71-10092} and by ERC Advanced Grant `GeoScape'. The author is also a winner of Young Russian Mathematics Contest and would like to thank its sponsors and jury. Email:~\href{mailto:sagdeevarsenii@gmail.com}{\tt sagdeevarsenii@gmail.com}.}
}

\maketitle

\begin{abstract}
	Given a natural $n$, we construct a two-coloring of $\mathbb{R}^n$ with the maximum metric satisfying the following. For any finite set of reals $S$ with diameter greater than $5^{n}$ such that the distance between any two consecutive points of $S$ does not exceed one, no isometric copy of $S$ is monochromatic. As a corollary, we prove that any normed space can be two-colored such that all sufficiently long unit arithmetic progressions contain points of both colors.
\end{abstract}

\section{Introduction}

For an $n$-dimensional normed space $\R_N^n$, its {\it chromatic number} $\chi(\R_N^n)$ is the smallest $r$ such that there exists a coloring of the points of $\R_N^n$ with $r$ colors, i.e. an {\it $r$-coloring}, and with no two points of the same color unit distance apart. A general result by Kupavskii \cite{Kup} establishes an upper bound on this quantity depending only on the dimension:
$\chi(\R_N^n) \le (4+o(1))^n$
as $n \to \infty$.

This notion was most extensively studied for the $n$-dimensional $\ell_p$-spaces\footnote{Recall that the $\ell_p$-norm of $\x \in \R^n$ is given by $\|\x\|_p \coloneqq \big(\sum_{i}|x_i|^p\big)^{1/p}$ for any real $p\ge 1$, and in case $p = \infty$ by $	\|\x\|_{\infty} \coloneqq \max_i |x_i|$.} $\mathbb{R}^n_{p}$ and especially for the Euclidean spaces $\R_2^n$. Currently, the best known bounds on the plane are $5 \le \chi(\R_2^2) \le 7$. The lower bound is a relatively recent breakthrough by de Grey \cite{degrey} (reproved quickly after by Exoo and Ismailescu \cite{Exoo}). The upper bound here is classical. See also Soifer’s account of the history of this problem in \cite{Soifer}. As for the growing dimension case, currently the best asymptotic lower and upper bounds belong to Raigorodskii \cite{Rai3} and Larman and Rogers \cite{Lar, Pros2020_LR} respectively: $(1.239+o(1))^n \le \chi(\R_2^n) \le (3+o(1))^n$ as $n \to \infty$. For non-Euclidean $\ell_p$-spaces the value $\chi(\R_p^n)$ was shown to grow exponentially with $n$ as well (for instance, see a paper \cite{Rai4} and two surveys \cite{RaiSur1, RaiSur2} by Raigorodskii). The case of the Chebyshev spaces $\R_\infty^n$ stands out here because of the folklore equality $\chi( \R^n_\infty) = 2^n$ that holds for all $n \in \mathbb{N}$.

To generalize these problems, one can forbid more complex configurations to be monochromatic. Given a normed space $\R_N^n$ and a subset $\M \subset \R^n$, the {\it chromatic number $\chi(\R_N^n,\M)$} is the smallest $r$ such that there exists an $r$-coloring of $\R^n$ with no monochromatic {\it $N$-isometric copy}\footnote{A subset $\M' \subset \R^n$ is called an {\it $N$-isometric copy} of $\M$ if there exists a bijection $f: \M \to \M'$ such that $\|\x-\y\|_N = \|f(\x)-f(\y)\|_N$ for all $\x,\y \in \M$.} of $\M$. In these terms, $\chi(\R_N^n) = \chi(\R_N^n,I)$, where $I$ is a two-point set.

A systematic study of this notion began with three classic papers by Erd\H{o}s, Graham, Montgomery, Rothschild, Spencer, and Straus \cite{EGMRSS1,EGMRSS2,EGMRSS3}, and now grew into a separate branch of combinatorics, see a survey \cite{Graham2017} by Graham. In the Euclidean case, the most extensively studied question here is the following. Given $\M$, determine if it is {\it Ramsey}, i.e., if the value $\chi(\R_2^n,\M)$ tends to infinity as $n$ grows. The sets of vertices of simplices, boxes \cite{FrRod} and regular polytopes \cite{Kriz1,Cant} are known to be Ramsey (for the explicit bounds on these chromatic numbers see \cite{Naslund, Pros2018_ExpRams, Sag2018_CartProd}). However, the problem of determining all Ramsey sets remains widely open in general, and even the conjectures on the answer spur debates, see \cite{Leader}.

One of the simplest examples of non-Ramsey sets are provided by one-dimensional configurations. Following the paper \cite{KupSag}, given a sequence of positive reals $\lambda_1,\dots, \lambda_k$, we call a set $\{0, \lambda_1, \lambda_1+\lambda_2, \dots,\sum_{t=1}^{k}\lambda_t\} \subset \R$ a {\it baton} and denote it by $\B(\lambda_1,\dots, \lambda_k)$. In case  $\lambda_1=\dots=\lambda_k=1$, i.e., if the set is just a unit arithmetic progression, we simply denote it by $\B_k$ for a shorthand. Erd\H{o}s et al. \cite{EGMRSS1} showed that any baton $\B$ of at least three points is not Ramsey, since $\chi(\R_2^n, \B) \le 16$ for all $n \in \N$. Moreover, they proved that
\begin{equation} \label{eq_Erd}
	\chi(\R_2^n, \B_k) = 2
\end{equation}
for all $k \ge 5$ and for all natural $n$. (Note that it is unknown \cite{Graham2017} whether the values $5$ and $16$ here are tight.) For asymmetric versions of these results see \cite{ArTsat,ArTsat2,ConFox}.

The goal of our paper is to find the analogues of \eqref{eq_Erd} for non-Euclidean normed spaces. More precisely, we positively resolve the following general problem for a wide class of normed spaces.

\begin{Problem} \label{prob}
	Is it true that for any normed space $\R_N^n$, there is $k=k(\R_N^n)$ such that $\chi(\R_N^n, \B_k) = 2$?
\end{Problem}

In a recent series of papers \cite{FKS,FKS2,KupSag_RMS,KupSag}, the chromatic numbers $\chi(\R_\infty^n, \M)$ of the $n$-dimensional Chebyshev spaces $\R_\infty^n$ were studied. In particular, it was proven in~\cite{KupSag} that
\begin{equation} \label{eq_KS}
	\chi(\R_\infty^n, \B_k) \ge \left(\frac{k+1}{k}\right)^n
\end{equation}
for all $k,n \in \N$. This inequality shows that, unlike the Euclidean case, for any given $k$, every two-coloring of $\R^n$ contains a monochromatic $\ell_\infty$-isometric copy of $\B_k$ whenever the dimension $n$ is large enough in terms of $k$. However, the next theorem, which is the main result of our paper, shows that this is not that case in the `opposite' setting, when $k$ is sufficiently large in terms of $n$.

\begin{Theorem} \label{th_main}
	Given $n \in \N$, there exists a two-coloring of $\R^n$ with no monochromatic $\ell_\infty$-isometric copies of all batons $\B(\lambda_1,\dots, \lambda_k)$ such that $\max_t\lambda_t \le 1$ and $\sum_{t=1}^k\lambda_t \ge 5^n$. In particular, for all $n \in \N$ and $k \ge 5^{n}$, we have $\chi(\R_\infty^n,\B_k)=2.$
\end{Theorem}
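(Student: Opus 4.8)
The plan is to prove the first, stronger assertion of Theorem~\ref{th_main} --- the existence for each $n$ of the desired two-coloring of $\R^n$ --- since the claim $\chi(\R_\infty^n,\B_k)=2$ is then immediate ($\B_k=\B(1,\dots,1)$ is one of the forbidden batons, so that coloring witnesses $\chi\le 2$, while $\chi\ge 2$ is trivial). Everything hinges on one rigidity fact about isometric copies of batons in $\ell_\infty$, which I would prove first. If $\x_0,\dots,\x_k$ is an $\ell_\infty$-isometric copy of $\B(\lambda_1,\dots,\lambda_k)$ and $D\coloneqq\sum_t\lambda_t$, then $\|\x_0-\x_k\|_\infty=D$, so some coordinate $i$ has $|x_k^{(i)}-x_0^{(i)}|=D$; say $x_k^{(i)}>x_0^{(i)}$ (the opposite case is symmetric). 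Then
\[
D=x_k^{(i)}-x_0^{(i)}=\sum_{t=1}^{k}\big(x_t^{(i)}-x_{t-1}^{(i)}\big)\le\sum_{t=1}^{k}\big|x_t^{(i)}-x_{t-1}^{(i)}\big|\le\sum_{t=1}^{k}\|\x_t-\x_{t-1}\|_\infty=\sum_{t=1}^{k}\lambda_t=D ,
\]
forcing equality everywhere: the $i$-th coordinates increase, with the $t$-th step increasing by exactly $\lambda_t$, whence $|x_t^{(j)}-x_s^{(j)}|\le\lambda_{s+1}+\dots+\lambda_t$ for all $j$ and $s<t$. In particular consecutive points lie within $\ell_\infty$-distance $\max_t\lambda_t\le 1$, and every coordinate is ``cone-controlled'' by the progress of coordinate $i$. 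So a monochromatic baton of total length at least $5^n$ yields a monochromatic sequence that runs monotonically across a window of length $\ge 5^n$ in one coordinate while, per unit of that progress, moving slowly in all the others; it suffices to produce, for each $n$, a two-coloring of $\R^n$ admitting no such configuration.

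I would do this by induction on $n$, building a coloring $\snake_n$. For $n=1$ take $\snake_1(x)=\lfloor x\rfloor\bmod 2$: a monochromatic increasing sequence with gaps $\le 1$ that starts in $[m,m+1)$ must stay in $[m,m+1)$ (each next point is at most $1$ further and has the same color), so its window has length $<1<5$. For the inductive step I would assemble $\snake_n$ from $\snake_{n-1}$ by a \emph{snaking} layout along the last coordinate: cut the $x_n$-axis into consecutive intervals of length $5^{n-1}$ and group them into periods of five --- two ``plateau'' intervals whose slices carry the pattern $\snake_{n-1}(x_1,\dots,x_{n-1})$, two whose slices carry its complement, and transition intervals in between on which the slice is morphed gradually from one to the other, flipping only one $\ell_\infty$-bounded piece of the $(n-1)$-dimensional pattern at a time; the bookkeeping of which piece is flipped at which height is what the auxiliary coloring $\Snake$ would record. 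The guiding principle is a dichotomy for a putative monochromatic configuration: during the stretches of its intrinsic parameter on which coordinate $x_n$ stays inside a single plateau, the slice is literally $\snake_{n-1}$, so by cone control the projection that forgets $x_n$ is again a baton copy and the inductive hypothesis bounds the length of that stretch unless the configuration escapes the plateau; and escaping it costs progress in $x_n$, which over a full period of length $5\cdot 5^{n-1}=5^n$ is already enough to flip the ambient color class and so to change color.

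The main obstacle, and the reason the construction must be as elaborate as it is, is making the escape genuinely costly --- i.e., defeating the ``diagonal'' configurations that drift in $x_n$ through the transition zones. A crude layout that merely complements or translates $\snake_{n-1}$ from one slab to the next is beaten by a configuration that at each slab boundary simultaneously nudges its $(x_1,\dots,x_{n-1})$-coordinates into an oppositely colored cell and thereby absorbs the flip for free; a straight ``ridge'' left in the pattern can likewise be followed indefinitely by a barely-oscillating baton. Morphing the pattern one $\ell_\infty$-bounded piece at a time is meant to deny exactly these tricks: over any window of length $5^{n-1}$ in $x_n$ the slice changes only within a region too small for a cone-controlled sequence to profit from, so either $x_n$-progress has already forced a color change or the sequence is trapped for long enough in an essentially fixed $(n-1)$-dimensional pattern for the inductive hypothesis to apply. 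Turning this into a proof means carefully accounting, along the snake, for how much ``room'' in each coordinate a hypothetical monochromatic configuration has available, and verifying that five copies of $5^{n-1}$ always suffice to exhaust it; this accounting is the technical heart, and it is where the bound $5^n$ comes from. (The resolution of Problem~\ref{prob} for an $n$-dimensional normed space $\R_N^n$ then follows routinely: its unit ball is sandwiched between two coordinate boxes, so a unit arithmetic progression becomes, in suitably rescaled coordinates, a baton with all gaps at most one and total length proportional to its number of points, and Theorem~\ref{th_main} applies to the pulled-back two-coloring once the progression is long enough.)
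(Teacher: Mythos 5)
Your ``cone control'' rigidity fact is correct and is precisely the paper's Lemma~\ref{lem_baton_embed}, your base coloring $\lfloor x\rfloor\bmod 2$ of $\R^1$ is exactly the paper's, and you have found the right high-level plan: an inductive, $5$-periodic snake-like construction along one distinguished coordinate, with a dichotomy between batons whose special direction is that coordinate and those whose special direction lies in the remaining coordinates (where the inductive hypothesis applies to a projected configuration).

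The inductive step, however, has a genuine gap: the coloring is never actually constructed. ``Morphing the slice one $\ell_\infty$-bounded piece at a time, flipping only one piece at a time'' is not a definition of a two-coloring of $\R^n$ --- an $(n-1)$-dimensional periodic pattern has unboundedly many such pieces, you never specify which piece is flipped at which height, and you do not establish any quantitative separation property that a cone-controlled sequence cannot exploit; you concede this yourself (``this accounting is the technical heart''), but that heart is absent. The paper's construction is also quite different from what you outline, so the gap is unlikely to close by merely sharpening your sketch. It does not morph slices from a pattern to its complement at all. It builds a piecewise-linear hypersurface $\snake^n\subset\R^{n+1}$ by the explicit inductive Minkowski-sum formula~\eqref{eq_snake_def}, with the parameters $a_i,b_i$ forming the geometric progression~\eqref{eq_th_main_param}, and colors $\R^{n+1}$ by the parity of the $\Z\cdot\one_{n+1}$-translate of $\Snake^n$ containing each point. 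The slices of $\Snake^n$ are then translates of a \emph{thickened} lower-dimensional snake sliding along the $\one_n$ direction, not a pattern/complement/transition sequence (Lemma~\ref{lem_contour}); the mechanism defeating a baton with special direction $n+1$ is a jump of more than $b_n-1=3$ in the $\one_n$ direction whenever the last coordinate crosses a multiple of $a_n$, which a unit-gap baton cannot follow (via Lemma~\ref{lem_dist}); and the mechanism defeating the other directions is the scaling identity of Lemma~\ref{lem_homot} together with Corollary~\ref{cor}, which lets the inductive hypothesis apply to the projected baton. Without analogues of Lemmas~\ref{lem_dist}, \ref{lem_inj}, \ref{lem_homot} and~\ref{lem_contour} --- none of which your sketch supplies --- the dichotomy you describe cannot be made to close.
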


Our proof of this theorem is constructive. Note that in this paper we make no attempts to optimize the constant $5^n$ in the statement in order to keep the proof comprehensible, see further discussion in Section~\ref{sec_concl}. 

We apply Theorem~\ref{th_main} to get a positive solution of Problem~\ref{prob} for many normed spaces $\R_N^n$ other than $\R_\infty^n	$. The main obstacle on this way is the following. An arbitrary $N$-isometric copy of a baton $\B$ in $\R^n$ is not necessarily an $\ell_\infty$-isometric copy of some other baton\footnote{Indeed, the set of three points $(0,0)$, $(1,0)$, and $(1,1)$ on the plane is an $\ell_1$-isometric copy of $\B_2$. At the same time, all the $\ell_\infty$-distances between them are unit, and thus none of the distances equals the sum of two others.}. However, this obstacle vanishes once we consider only {\it collinear} $N$-isometric copies of $\B$. In the special case $\B=\B_k$, we call its collinear $N$-isometric copies {\it unit arithmetic progressions in $\R_N^n$ of length $k+1$}.

\begin{Corollary} \label{cor1}
	For any normed space $\R_N^n$ there exists a real $\delta=\delta(\R_N^n)$ such that the following holds. There exists a two-coloring of $\R^n$ with no monochromatic collinear $N$-isometric copies of all batons $\B(\lambda_1,\dots, \lambda_k)$ such that $\max_t\lambda_t \le 1$ and $\sum_{t=1}^k\lambda_t \ge \delta$. In particular, all sufficiently long unit arithmetic progressions in $\R_N^n$ contain points of both colors under this coloring.
\end{Corollary}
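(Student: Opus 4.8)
The goal is to reduce the problem of coloring an arbitrary normed space $\R^n_N$ to the already-solved Chebyshev case. The key fact about collinear isometric copies of a baton: a collinear $N$-isometric copy of $\B(\lambda_1, \dots, \lambda_k)$ is a set of points $\x_0, \x_1, \dots, \x_k$ all lying on a single line, with $\|\x_{t} - \x_{t-1}\|_N = \lambda_t$. Writing $\x_t = \x_0 + \mu_t \mathbf{u}$ for a fixed direction vector $\mathbf{u} \in \R^n$ and increasing reals $\mu_0 < \mu_1 < \dots < \mu_k$, collinearity forces $\lambda_t = (\mu_t - \mu_{t-1})\|\mathbf{u}\|_N$. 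Hence the whole configuration, regardless of direction, is an honest one-dimensional baton scaled by $\|\mathbf{u}\|_N$: its $N$-geometry along the line is just $|\mu_s - \mu_t|\cdot\|\mathbf{u}\|_N$. This is precisely what makes the obstacle described in the footnote vanish.

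First I would fix the coloring $c_\infty$ of $\R^n$ supplied by Theorem~\ref{th_main}, which has no monochromatic $\ell_\infty$-isometric copy of any baton with steps at most $1$ and total length at least $5^n$. Next, I would compare the two norms: since all norms on $\R^n$ are equivalent, there are positive constants $a \le b$ with $a\|\x\|_\infty \le \|\x\|_N \le b\|\x\|_\infty$ for all $\x$. Now take any collinear $N$-isometric copy $\x_0, \dots, \x_k$ of a baton $\B(\lambda_1, \dots, \lambda_k)$ with $\max_t \lambda_t \le 1$ and $\sum_t \lambda_t \ge \delta$, where $\delta$ will be chosen below. Since the points are collinear, the same points form a collinear $\ell_\infty$-isometric copy of the baton $\B(\lambda'_1, \dots, \lambda'_k)$ where $\lambda'_t = \|\x_t - \x_{t-1}\|_\infty$. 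The equivalence of norms gives $\lambda'_t \le \lambda_t / a$ and $\sum_t \lambda'_t \ge \frac1b\sum_t\lambda_t \ge \delta/b$. This baton does not yet have steps bounded by $1$, but after rescaling the configuration by the fixed factor $a$ (which preserves colinearity and maps $\ell_\infty$-isometric copies to $\ell_\infty$-isometric copies of the rescaled baton, though not the coloring — so instead one should rescale the baton we feed to the theorem), the steps become $\le 1$ and the total length is at least $a\delta/b$. Choosing $\delta \coloneqq 5^n b / a$ makes this total at least $5^n$, so by Theorem~\ref{th_main} the configuration is non-monochromatic under $c_\infty$. Setting the coloring of Corollary~\ref{cor1} to be $c_\infty$ itself then finishes the argument; the "in particular" clause about unit arithmetic progressions is the special case $\lambda_1 = \dots = \lambda_k = 1$, $k \ge \delta$.

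The one subtlety I would be careful about is the interplay between rescaling the configuration and the fact that $c_\infty$ is a fixed coloring that is not scale-invariant. The clean way to handle this: given the collinear $N$-copy of $\B(\lambda_1,\dots,\lambda_k)$ sitting in $\R^n$, it is \emph{already} (as a point set, in the same location) a collinear $\ell_\infty$-copy of $\B(\lambda'_1, \dots, \lambda'_k)$ with $\lambda'_t = \|\x_t-\x_{t-1}\|_\infty \le \lambda_t/a \le 1/a$. Theorem~\ref{th_main} as stated requires $\max_t \lambda'_t \le 1$; if $a < 1$ this may fail. To fix this without moving the point set, apply Theorem~\ref{th_main} not to $\B(\lambda'_1,\dots,\lambda'_k)$ but observe that we may subdivide: replace each step $\lambda'_t$ of length up to $1/a$ by $\lceil 1/a \rceil$ equal sub-steps, inserting new points along the line; the resulting longer baton has all steps $\le 1$, total length unchanged ($\ge \delta/b$), and the enlarged point set is still a collinear $\ell_\infty$-isometric copy of it. Then $\delta \coloneqq 5^n b$ suffices. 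A monochromatic original configuration would yield a monochromatic copy of this subdivided baton, contradicting Theorem~\ref{th_main}. This subdivision trick — reconciling "steps $\le 1$" across the two norms — is the only place requiring genuine (if routine) care; everything else is a direct translation through norm equivalence.
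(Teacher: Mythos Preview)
Your subdivision trick has a genuine gap. If the original points $\x_0,\dots,\x_k$ are all red and you insert new points between consecutive ones to force each $\ell_\infty$-step below $1$, there is no reason the inserted points are red as well; so a monochromatic original configuration does \emph{not} yield a monochromatic subdivided baton, and you cannot invoke Theorem~\ref{th_main} on the enlarged set. The implication you wrote, ``a monochromatic original configuration would yield a monochromatic copy of this subdivided baton,'' goes the wrong way: the subdivision is a superset, not a subset.

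The clean fix is the observation you already made in your Plan and then abandoned. Along a fixed line with direction vector $\mathbf{u}$, the ratio $\mu \coloneqq \|\x_t-\x_{t-1}\|_\infty/\|\x_t-\x_{t-1}\|_N = \|\mathbf{u}\|_\infty/\|\mathbf{u}\|_N$ is the \emph{same} for every $t$, so $\lambda_t' = \mu\lambda_t$ with a single $\mu$. This is exactly what the paper uses: writing the norm comparison as $c\|\x\|_N \le \|\x\|_\infty \le C\|\x\|_N$ and rescaling so that $C=1$, one gets $\mu\le 1$, hence $\max_t\lambda_t' = \mu\max_t\lambda_t \le 1$ automatically, while $\sum_t\lambda_t' = \mu\sum_t\lambda_t \ge c\delta$; choosing $\delta = 5^n/c$ lets Theorem~\ref{th_main} apply directly to the \emph{same} point set, with no subdivision and no need to move the coloring. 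Equivalently, in your notation, use the coloring $\x\mapsto c_\infty(a\x)$ rather than $c_\infty$ itself; then the rescaled points $a\x_0,\dots,a\x_k$ form an $\ell_\infty$-copy of $\B(a\mu\lambda_1,\dots,a\mu\lambda_k)$ with $a\mu\le 1$, and $\delta = 5^n b/a$ works. Either way, the crucial input is the constancy of $\mu$, not a subdivision.
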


Observe that if the norm $N$ on $\R^n$ is {\it strictly convex}\footnote{The norm $N$ on $\R^n$ is called {\it strictly convex} if and only if for all $\x,\y \in \R^n,$ the equality $\|\x+\y\|_N = \|\x\|_N+\|\y\|_N$ implies that $\x$ and $\y$ are collinear.}, then each $N$-isometric copy of any baton $\B$ must be collinear. Thus, Corollary~\ref{cor1} yields a positive solution of Problem~\ref{prob} for such normed spaces. For instance, it is well-known that the $\ell_p$-norm is strictly convex for all $1<p<\infty$.

The next corollary deals with somewhat the `opposite' situation, when the `unit ball' of the norm is a centrally symmetric convex polytope (and thus such norm is clearly not strictly convex).

\begin{Corollary} \label{cor2}
	Let $\R_N^n$ be a normed space whose unit ball is a centrally symmetric convex polytope in $\R^n$ with $2f$ facets. Then there exists a two-coloring of $\R^n$ with no monochromatic $N$-isometric copies of all batons $\B(\lambda_1,\dots, \lambda_k)$ such that $\max_t\lambda_t \le 1$ and $\sum_{t=1}^k\lambda_t \ge 5^f$. In particular, for all $k \ge 5^{f}$, we have $\chi(\R_N^n,\B_k)=2.$
\end{Corollary}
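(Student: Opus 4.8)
The plan is to identify the polytope norm $N$ with the restriction of the $\ell_\infty$-norm on $\R^f$ to a linear subspace, and then to pull back to $\R^n$ the two-coloring produced by Theorem~\ref{th_main} in dimension $f$. The point is that "$2f$ facets" is exactly the information one needs to embed $\R^n_N$ isometrically into $\R^f_\infty$, after which the theorem does everything.

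First I would record the standard polyhedral description of $N$. Since the unit ball $B$ of $N$ is a centrally symmetric convex polytope --- in particular full-dimensional, bounded, and with the origin in its interior --- its $2f$ facets come in $f$ antipodal pairs, and each such pair is carved out by a pair of parallel hyperplanes symmetric about $0$. Hence there are linear functionals $a_1,\dots,a_f$ on $\R^n$ with
\[
	B=\bigl\{\x\in\R^n:\ |a_i(\x)|\le 1\ \text{for all } i=1,\dots,f\bigr\},
\]
so that $\|\x\|_N=\max_{1\le i\le f}|a_i(\x)|$ for every $\x\in\R^n$. Boundedness of $B$ forces $a_1,\dots,a_f$ to span the dual space, so the linear map $T\colon\R^n\to\R^f$ given by $T\x\coloneqq\bigl(a_1(\x),\dots,a_f(\x)\bigr)$ is injective, and by construction $\|\x-\y\|_N=\|T\x-T\y\|_\infty$ for all $\x,\y\in\R^n$; that is, $T$ embeds $\R^n_N$ isometrically onto the linear subspace $T(\R^n)\subseteq\R^f_\infty$.

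Next, let $c\colon\R^f\to\{0,1\}$ be the coloring from Theorem~\ref{th_main} applied with dimension $f$, and put $c'\coloneqq c\circ T$ on $\R^n$. Suppose, for contradiction, that $c'$ had a monochromatic $N$-isometric copy $\{\mathbf p_0,\dots,\mathbf p_k\}$ of a baton $\B(\lambda_1,\dots,\lambda_k)$ with $\max_t\lambda_t\le 1$ and $\sum_t\lambda_t\ge 5^f$. The partial sums defining a baton are strictly increasing, so these $k+1$ points are pairwise distinct, and hence so are their images $T\mathbf p_0,\dots,T\mathbf p_k$ by injectivity of $T$. Since $T$ preserves all pairwise distances, $\{T\mathbf p_0,\dots,T\mathbf p_k\}$ is an $\ell_\infty$-isometric copy of the same baton, and it is monochromatic under $c$ because $c(T\mathbf p_j)=c'(\mathbf p_j)$ for every $j$ --- contradicting the defining property of $c$. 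This proves the first assertion. For the second, $\chi(\R^n_N,\B_k)\ge 2$ is trivial (one line carries a unit arithmetic progression of any length), and taking $\lambda_1=\dots=\lambda_k=1$ above gives $\chi(\R^n_N,\B_k)\le 2$ whenever $k\ge 5^f$; hence equality.

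The whole argument is short, and I expect the only place needing care is the first step: justifying that a centrally symmetric convex polytope with $2f$ facets is precisely an intersection of $f$ symmetric slabs $\{|a_i(\cdot)|\le 1\}$, i.e., that the irredundant defining inequalities are in bijection with the facets and pair up under central symmetry. This is classical, but it is exactly the crux that converts the hypothesis into the clean statement "$\R^n_N$ embeds isometrically as a linear subspace of $\R^f_\infty$"; note also that $N$-isometric copies of batons transfer verbatim under a \emph{linear} isometric embedding, which is why --- unlike in Corollary~\ref{cor1} --- no collinearity assumption or approximation step is required here.
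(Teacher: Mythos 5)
Your argument is correct and is essentially the same as the paper's: both identify the polytope norm as $\|\x\|_N=\max_i|\langle\mathbf{c}_i,\x\rangle|$ using the $f$ antipodal facet pairs, use this to embed $\R^n_N$ isometrically as a linear subspace of $\R^f_\infty$, and pull back the coloring from Theorem~\ref{th_main} in dimension $f$. The only cosmetic difference is that you phrase the embedding via abstract linear functionals $a_i$ while the paper writes it via dot products with normal vectors $\mathbf{c}_i$; the substance is identical.
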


Clearly, this result can be applied to the $n$-dimensional space $\R_1^n$ with the Manhattan distance, whose unit ball is a cross-polytope with $2^n$ facets. Along with Corollary~\ref{cor1}, these results solve Problem~\ref{prob} for all $\ell_p$-spaces. However, the general case remains open.

\vspace{5mm}

We organize the remainder of the paper as follows. Section~\ref{sec_prel} contains some preliminary technical statements: a classification of all $\ell_\infty$-isometric copies of batons in $\R^n$, a notion of a `snake hypersurface', and proofs of its basic properties. In Sections~\ref{sec_main_proof} we construct an explicit coloring of $\R^{n+1}$ based on these hypersurfaces and prove Theorem~\ref{th_main}. In Section~\ref{sec_cor} we deal with normed spaces other than $\R_\infty^n$ and prove Corollaries~\ref{cor1} and~\ref{cor2}. Finally, in Section~\ref{sec_concl} we make some further comments and state more open problems.

\section{Preliminaries} \label{sec_prel}

\subsection{Embeddings of the batons into $\R_\infty^n$} \label{sec_baton}

Let $\B = \B(\lambda_1,\dots,\lambda_k) \subset \R$ be an arbitrary fixed baton. It is clear that, for all $x \in \R$, both a {\it translation} $x+\B \coloneqq \{x, x+\lambda_1,\dots, x+\sum_{t=1}^k\lambda_t\}$ and a {\it reflection} $x-\B \coloneqq \{x, x-\lambda_1,\dots, x-\sum_{t=1}^k\lambda_t\}$ are isometric copies of $\B$ (regardless of the considered norm on $\R$). Moreover, it is not hard to see that there are no other isometric copies of $\B$ on the line.

The following simple lemma extends these ideas to the multidimensional Chebyshev spaces $\R_\infty^n$. Roughly speaking, it states that an arbitrary set of points of $\R^n$ forms an $\ell_\infty$-isometric copy of $\B$ if and only if one of the projections of the set on basic axes is either a translation or a reflection of $\B$, while all the other projections do not affect the distances between the points. This lemma appeared earlier in \cite{FKS}, but we give its full short proof below for completeness.

In the remainder of the text we will use the notation $\x = (x_1,\dots,x_n)$ for points $\x\in \mathbb{R}^n$.

\begin{Lemma} \label{lem_baton_embed}
	Let $k,n \in \N$ and $\lambda_1,\dots,\lambda_k$ be a sequence of positive reals. Then the sequence $\x^0,\ldots, \x^k$ of points in $\R^n$ is $\ell_\infty$-isometric to $\B = \B(\lambda_1,\dots,\lambda_k)$ (in that order) if and only if the following two conditions hold. First, there exists an $i \le n$ such that the sequence $x_i^0,\ldots,x_i^k$ is either a translation or a reflection of $\B$. Second, for any $j\le n$ and any $s \le k$ we have  $|x_j^{s}-x_j^{s-1}| \le |x_i^s-x_i^{s-1}| = \lambda_s.$
\end{Lemma}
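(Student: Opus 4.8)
The plan is to prove both directions by translating the $\ell_\infty$-isometry condition into coordinatewise statements and then exploiting the one-dimensional classification of isometric copies of a baton (translations and reflections only) recalled just before the lemma.

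For the ``if'' direction, suppose the two conditions hold. I first observe that consecutive points are at the right distance: for any $s \le k$, since $|x_j^s - x_j^{s-1}| \le \lambda_s$ for all $j$ and equality is attained at $j = i$, we get $\|\x^s - \x^{s-1}\|_\infty = \lambda_s$. For non-consecutive indices $0 \le a < b \le k$, I would work coordinatewise: in coordinate $i$, because $x_i^0,\dots,x_i^k$ is a translation or reflection of $\B$, the points $x_i^a$ and $x_i^b$ differ by exactly $\sum_{t=a+1}^{b}\lambda_t$ (this is the defining property of a baton — partial sums are monotone). In every other coordinate $j$, the triangle inequality gives $|x_j^b - x_j^a| \le \sum_{t=a+1}^{b}|x_j^t - x_j^{t-1}| \le \sum_{t=a+1}^{b}\lambda_t$. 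Hence $\|\x^b - \x^a\|_\infty = \sum_{t=a+1}^{b}\lambda_t$, which is exactly the distance between the corresponding points of $\B$. So the sequence is $\ell_\infty$-isometric to $\B$ in the given order.

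For the ``only if'' direction, assume $\x^0,\dots,\x^k$ is $\ell_\infty$-isometric to $\B$ in that order. The key quantitative input is that in $\B$, the distance from the first point to the last equals $\sum_{t=1}^k \lambda_t$, which is the sum of the consecutive distances $\lambda_s = \|\x^s - \x^{s-1}\|_\infty$; that is, the ``baton polygonal path'' $\x^0 \to \x^1 \to \dots \to \x^k$ realizes the $\ell_\infty$-distance between its endpoints with equality in the triangle inequality. Picking a coordinate $i$ where $|x_i^k - x_i^0|$ is maximal, I would argue that this forces $|x_i^k - x_i^0| = \sum_{t=1}^k \lambda_t$: otherwise every coordinate of $\x^k - \x^0$ has absolute value strictly less than $\sum\lambda_t$, contradicting $\|\x^k-\x^0\|_\infty = \sum \lambda_t$. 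Then, again by the triangle inequality applied coordinatewise in coordinate $i$, we have $\sum_{t=1}^k \lambda_t = |x_i^k - x_i^0| \le \sum_{t=1}^k |x_i^t - x_i^{t-1}| \le \sum_{t=1}^k \lambda_s$, so all these inequalities are equalities: $|x_i^t - x_i^{t-1}| = \lambda_t$ for every $t$, and moreover all the signed differences $x_i^t - x_i^{t-1}$ have the same sign. This precisely says the sequence $x_i^0,\dots,x_i^k$ is a translation (if the sign is positive) or a reflection (if negative) of $\B$, giving the first condition. The second condition then follows: $|x_i^s - x_i^{s-1}| = \lambda_s$ was just shown, and $|x_j^s - x_j^{s-1}| \le \|\x^s - \x^{s-1}\|_\infty = \lambda_s$ for every $j$ by the definition of the $\ell_\infty$-norm together with the isometry.

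The only mildly delicate point — and the one I would state most carefully — is the step extracting ``same sign'' from the chain of equalities in coordinate $i$: equality in $\big|\sum_t (x_i^t - x_i^{t-1})\big| \le \sum_t |x_i^t - x_i^{t-1}|$ forces all nonzero summands to share a sign, and here none of them is zero since each equals $\pm\lambda_t$ with $\lambda_t > 0$; this is what lets us conclude it is an honest translation or reflection rather than some ``zig-zag'' with the right consecutive gaps. Everything else is a routine unwinding of definitions, so I do not expect a genuine obstacle.
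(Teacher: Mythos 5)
Your proof is correct and follows essentially the same route as the paper's: both directions hinge on picking a coordinate $i$ where $|x_i^k-x_i^0|$ equals $\sum_t\lambda_t$, and your ``if'' direction is word-for-word the paper's telescoping argument. The only divergence is cosmetic: for the ``only if'' direction the paper pins down each intermediate $x_i^s$ individually by combining $|x_i^s-x_i^0|\le\sum_{t\le s}\lambda_t$ with $|x_i^k-x_i^s|\le\sum_{t> s}\lambda_t$, whereas you run the telescoping triangle inequality $|x_i^k-x_i^0|\le\sum_t|x_i^t-x_i^{t-1}|\le\sum_t\lambda_t$ over the whole sequence and extract the common sign of the steps from the forced equalities --- equivalent in content, and arguably a touch more explicit about why the projection onto axis $i$ is an honest translation or reflection rather than a zig-zag.
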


\begin{proof}
	Let the sequence $\x^0,\ldots,\x^k$ of points in $\R^n$ be an $\ell_\infty$-isometric copy of $\B$. For a shorthand, put $\sigma_s \coloneqq \sum_{t=1}^{s} \lambda_t$ for all $s=0,\dots,k$. Let $i$ be a coordinate such that $|x^k_i-x^0_i| =\sigma_k$ (there must exist at least one such coordinate, since $\max_j|x^k_j-x^0_j|=\|\x^k-\x^0\|_\infty = \sigma_{k}$). Then it should be clear that, for all $s=1,\ldots, k-1$, we have a unique choice of $x_i^s$ so that two inequalities $|x^s_i-x^0_i| \le \|\x^s-\x^0\|_\infty = \sigma_s$ and $|x^k_i-x^s_i| \le \|\x^k-\x^s\|_\infty = \sigma_k-\sigma_s$ hold simultaneously. Moreover, both inequalities must hold with equality in that case. This concludes the proof of the first part. The second part of the statement is trivial, since for any $j \le n$ and any $s \le k$, we clearly have $|x_j^s-x_j^{s-1}| \le \|\x^s-\x^{s-1}\|_\infty = \lambda_s = |x_i^s-x_i^{s-1}|$.
	
	To prove the opposite direction, let us assume that the sequence $\x^0,\ldots,\x^k$ of points in $\R^n$ satisfies both conditions of the lemma. Given $0 \le l < r \le k$, observe that the first condition implies that $|x_i^r-x_i^l| = \sigma_r-\sigma_l$. Moreover, from the second condition and the triangle inequality it follows that
	$$|x_j^r-x_j^l| \le \sum_{s=l+1}^{r} |x_j^s-x_j^{s-1}| \le \sum_{s=l+1}^{r} |x_i^s-x_i^{s-1}| = \sum_{s=l+1}^{r}\lambda_s = \sigma_r-\sigma_l$$
	for all $j \le n$. Hence, we have $\|\x^r-\x^l\|_{\infty} = |x_i^r-x_i^l| = \sigma_r-\sigma_l$. So, the sequence $\x^0,\ldots, \x^k$ is indeed an $\ell_\infty$-isometric copy of $\B$.
\end{proof}

We say that a given $\ell_\infty$-isometric copy of $\B$ {\it has direction $i$} if its projection on the $i$-th basic axis forms either a translation or a reflection of $\B$. Lemma~\ref{lem_baton_embed} implies that each $\ell_\infty$-isometric copy of $\B$ has at least (but not necessarily exactly) one direction.

\subsection{Snake hypersurfaces} \label{sec_hyper}

In the present section we introduce a notion of a `snake hypersurface' and prove that it shares many basic properties with hyperplanes. Informally speaking, Lemma~\ref{lem_homot} shows that the family of snake hypersurfaces is closed under scaling. Lemma~\ref{lem_dist} ensures that the smallest distance between two parallel copies of a snake hypersurface is attained on any pair of the corresponding points. Lemma~\ref{lem_inj} states that an orthogonal projection on a snake hypersurface is well defined. Finally, Lemma~\ref{lem_contour} affirms that a hyperplane section of a snake hypersurfaces is also a snake hypersurface but of dimension one less. These properties will play a crucial role in constructing a family of `snake colorings' and proving the main result of this paper in Section~\ref{sec_main_proof}.

We begin with setting up some notation. Let $\e_1,\dots,\e_n$ be the standard basis vectors for $\R^n$, and $\one_n \in \R^n$ be their sum, i.e., a vector with all $n$ coordinates being unit\footnote{Note that here and in what follows we do not distinguish points from their position vectors.}. For all $m < n$, we also identify $\R^m$ with an $m$-dimensional subspace of $\R^n$ which is spanned by its first $m$ basis vectors $\e_1,\dots,\e_m$. For instance, we identify the points $(1,2)\in \R^2$ and $(1,2,0) \in \R^3$. For two subsets $A,B \subset \R^n$, we denote their {\it Minkowski sum} $\{\mathbf{a}+\mathbf{b}: \mathbf{a} \in A, \mathbf{b} \in B\} \subset \R^n$ by $A+B$ as usual. We call the Minkowski sum {\it injective} if for all $\mathbf{a},\mathbf{a}' \in A, \mathbf{b},\mathbf{b}' \in B,$ the equality $\mathbf{a}+\mathbf{b} = \mathbf{a}'+\mathbf{b}'$ implies that $\mathbf{a}=\mathbf{a}'$ and $\mathbf{b}=\mathbf{b}'$. Finally, for a subset $A \subset \R^n$ and for a set of reals $I \subset \R$, we denote the set of pairwise products $\{i\cdot\mathbf{a}: i \in I, \mathbf{a} \in A\} \subset \R^n$ by $I\cdot A$.

%\newpage

Given $n \in \N$, a {\it snake hypersurface} $\snake^n(a_1,b_1,\dots,a_n,b_n)$ is an $n$-dimensional piecewise linear hypersurface in $\R^{n+1}$ that depends on $2n$ positive real parameters. The definition is by induction on $n$. Let $\snake^0(\varnothing) \coloneqq \{0\}$ be just the origin of the line. For $n>0$, we define $\snake^n(a_1,b_1,\dots,a_n,b_n)$ by
\begin{align} \label{eq_snake_def}
	\snake^n(a_1,b_1,\dots,a_n,b_n) =&\ \snake^{n-1}(a_1,b_1,\dots,a_{n-1},b_{n-1}) \nonumber \\
	&+\Z\cdot\{a_n\cdot\e_{n+1}-b_n\cdot\one_n\}+[0,a_n)\cdot\e_{n+1}\cup(0,b_n]\cdot\one_n.
\end{align}
We visualize this definition in case $n=1,2$ in Figure~\ref{Fig1}.

\begin{figure}[h]
	\centering
	\includegraphics[scale=0.45]{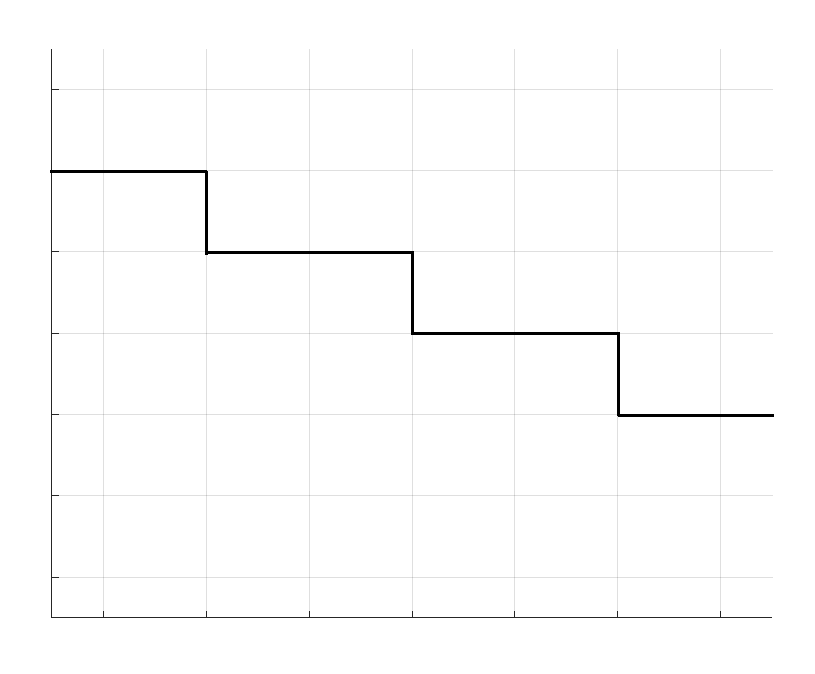}
	\includegraphics[scale=0.27]{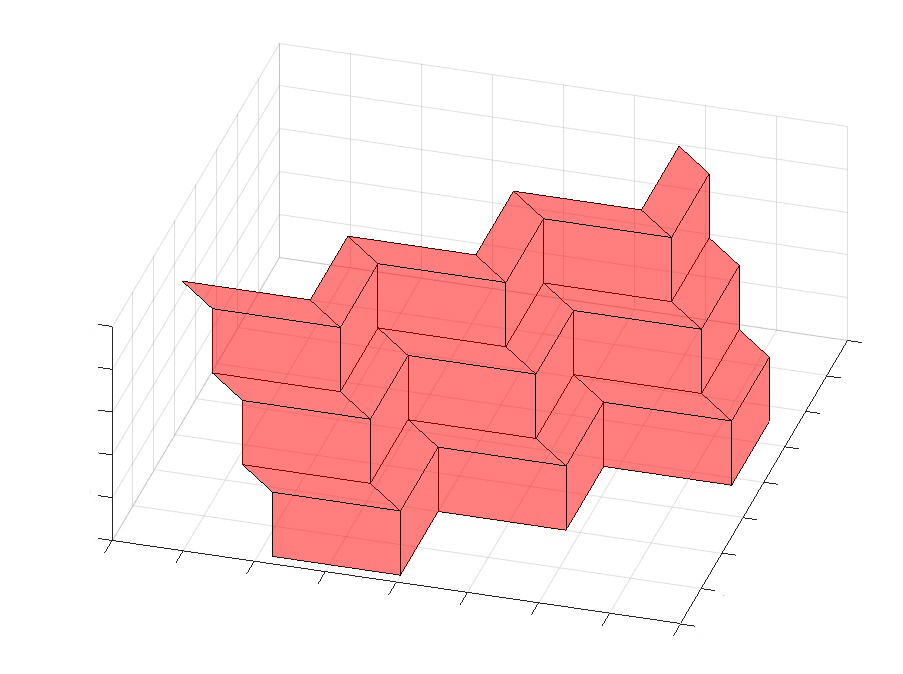}
	\caption{Snake hypersurfaces on the plane and in space}
	\label{Fig1}
\end{figure}

In what follows, whenever this does not cause confusion, we omit the parameters of a snake hypersurface and denote $\snake^n(a_1,b_1,\dots,a_n,b_n)$ simply by $\snake^n$ for a shorthand.

Now we give the aforementioned basic properties of these hypersurfaces as separate lemmas.

\begin{Lemma} \label{lem_homot}
	For all $n \in \N$, positive reals $a_1,b_1,\dots,a_n,b_n$ and $\mu$, we have
	\begin{equation*}
		\snake^n(\mu a_1,\mu b_1,\dots,\mu a_n,\mu b_n) = \mu \cdot \snake^n(a_1,b_1,\dots,a_n,b_n).
	\end{equation*}
\end{Lemma}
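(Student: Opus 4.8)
The plan is to prove this scaling identity by induction on $n$, mirroring the inductive definition \eqref{eq_snake_def}. The base case $n=0$ is immediate, since $\snake^0(\varnothing) = \{0\}$ and $\mu\cdot\{0\} = \{0\}$ for every positive $\mu$. For the inductive step, I would start from the right-hand side $\mu\cdot\snake^n(a_1,b_1,\dots,a_n,b_n)$, substitute the defining formula \eqref{eq_snake_def}, and then push the scalar $\mu$ through the three set-operations appearing there: the Minkowski sum, the set $\Z\cdot\{a_n\cdot\e_{n+1}-b_n\cdot\one_n\}$, and the union $[0,a_n)\cdot\e_{n+1}\cup(0,b_n]\cdot\one_n$.

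The key observation is that multiplication by a fixed positive scalar $\mu$ is a linear bijection of $\R^{n+1}$, hence it commutes with all of these operations. Concretely: $\mu\cdot(A+B) = (\mu\cdot A)+(\mu\cdot B)$ for the Minkowski sum; $\mu\cdot\big(I\cdot A\big) = I\cdot(\mu\cdot A)$ for any set of reals $I$ (in particular for $I=\Z$ and for $I=[0,a_n)$, $I=(0,b_n]$, after also noting $\mu\cdot[0,a_n) = [0,\mu a_n)$ and $\mu\cdot(0,b_n] = (0,\mu b_n]$ since $\mu>0$); and $\mu\cdot(U\cup V) = (\mu\cdot U)\cup(\mu\cdot V)$ for unions. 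Applying these termwise, the right-hand side becomes
\begin{align*}
	\mu\cdot\snake^{n-1}(a_1,b_1,\dots,a_{n-1},b_{n-1}) &+ \Z\cdot\{\mu a_n\cdot\e_{n+1}-\mu b_n\cdot\one_n\} \\
	&+ [0,\mu a_n)\cdot\e_{n+1}\cup(0,\mu b_n]\cdot\one_n,
\end{align*}
and by the induction hypothesis the first term equals $\snake^{n-1}(\mu a_1,\mu b_1,\dots,\mu a_{n-1},\mu b_{n-1})$. Comparing with the definition \eqref{eq_snake_def} of $\snake^n(\mu a_1,\mu b_1,\dots,\mu a_n,\mu b_n)$ finishes the induction.

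I do not anticipate a genuine obstacle here: the statement is essentially a bookkeeping exercise confirming that the inductive construction is equivariant under positive scalings. The only points requiring a modicum of care are that $\mu>0$ is needed to keep the half-open intervals $[0,a_n)$ and $(0,b_n]$ in the correct orientation (a negative $\mu$ would flip them), and that one should be explicit about why scalar multiplication commutes with Minkowski sums and with the $I\cdot A$ operation — both follow instantly from distributivity of scalar multiplication in $\R^{n+1}$. If desired, one could even phrase the whole argument more slickly by noting that $\x\mapsto\mu\x$ is a linear automorphism of $\R^{n+1}$ fixing the directions $\e_{n+1}$ and $\one_n$ up to the positive factor $\mu$, but the direct inductive computation is the most transparent route.
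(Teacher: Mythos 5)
Your proof is correct and follows essentially the same route as the paper's: induction on $n$, with the trivial base case $n=0$, and in the inductive step factoring the positive scalar $\mu$ through the Minkowski sum, the lattice term, and the half-open intervals in \eqref{eq_snake_def} before invoking the induction hypothesis on $\snake^{n-1}$. The only cosmetic difference is that you spell out the commutation rules ($\mu\cdot(A+B)=\mu A+\mu B$, $\mu\cdot[0,a_n)=[0,\mu a_n)$, etc.) as separate observations, whereas the paper applies them silently inside a single chain of equalities.
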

\begin{proof}
	The proof is by induction on $n$. In case $n=0$ we have $\{0\} = \mu \cdot \{0\},$ and there is simply nothing to prove. If $n>1$, we combine the induction hypothesis with \eqref{eq_snake_def} to conclude that
	\begin{align*}
		\snake^n(\mu a_1,\mu b_1,\dots,\mu a_n,\mu b_n) =&\ \snake^{n-1}(\mu a_1,\mu b_1,\dots,\mu a_{n-1},\mu b_{n-1})\\
		&+\Z\cdot\{\mu a_n\cdot\e_{n+1}-\mu b_n\cdot\one_n\}+[0,\mu a_n)\cdot\e_{n+1}\cup(0, \mu b_n]\cdot\one_n \\
		=&\ \mu\cdot \snake^{n-1}(a_1,b_1,\dots,a_{n-1},b_{n-1}) \\
		&+\mu\cdot\Z\cdot\{a_n\cdot\e_{n+1}-b_n\cdot\one_n\}+\mu\cdot[0,a_n)\cdot\e_{n+1}\cup\mu\cdot(0, b_n]\cdot\one_n \\
		=&\ \mu\cdot \Big( \snake^{n-1}(a_1,b_1,\dots,a_{n-1},b_{n-1}) \\
		&+\Z\cdot\{a_n\cdot\e_{n+1}-b_n\cdot\one_n\}+[0,a_n)\cdot\e_{n+1}\cup(0, b_n]\cdot\one_n \Big) \\
		=&\ \mu\cdot \snake^{n}(a_1,b_1,\dots,a_{n},b_{n}).
	\end{align*}
	This completes the proof. 
\end{proof}

\begin{Lemma} \label{lem_dist}
	For all $n \in \N$, positive reals $a_1,b_1,\dots,a_n,b_n$ and $t$ the following holds. For any two points $\x,\y \in \snake^n = \snake^n(a_1,b_1,\dots,a_n,b_n)$, we have $\|\x+t\cdot\one_{n+1}-\y\|_\infty \ge t$. In other words, the $\ell_\infty$-distance between $\snake^n$ and a translation $\snake^n+t\cdot\one_{n+1}$ equals $t$.
\end{Lemma}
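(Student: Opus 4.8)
The plan is to reduce the distance bound to a purely order-theoretic property of $\snake^n$ and then prove that property by induction on $n$, mirroring the recursive definition~\eqref{eq_snake_def}.

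\emph{Reduction.} For $\x,\y\in\R^{n+1}$ we have $\|\x+t\cdot\one_{n+1}-\y\|_\infty=\max_{1\le j\le n+1}\bigl|t-(y_j-x_j)\bigr|$. If some coordinate satisfies $y_j\le x_j$, then the $j$-th term equals $t-(y_j-x_j)\ge t$, so the whole maximum is at least $t$; and for $\x=\y$ it equals $t$, which also yields the ``in other words'' statement about the distance between the two parallel copies. Hence it suffices to prove the following claim: \emph{$\snake^n$ contains no two points $\x,\y$ with $x_j<y_j$ for every $j\in\{1,\dots,n+1\}$}, i.e.\ no point of $\snake^n$ strictly dominates another coordinatewise. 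Geometrically this says the snake hypersurface is ``monotone'': locally it runs along $\e_{n+1}$ or along $\one_n$, and moving to the next lattice cell trades $\one_n$-progress for $\e_{n+1}$-progress, so all coordinates can never increase simultaneously.

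\emph{The induction.} The base case $\snake^0=\{0\}$ is immediate. For the inductive step, suppose $\x,\y\in\snake^n$ with $x_j<y_j$ for all $j$. By~\eqref{eq_snake_def} pick representations $\x=\mathbf p+m\,w+\mathbf v$ and $\y=\mathbf q+m'\,w+\mathbf v'$, where $w\coloneqq a_n\e_{n+1}-b_n\one_n$, the points $\mathbf p,\mathbf q$ lie in $\snake^{n-1}\subset\R^n$ (hence have last coordinate $0$), $m,m'\in\Z$, and $\mathbf v\in[0,a_n)\cdot\e_{n+1}\cup(0,b_n]\cdot\one_n$, similarly $\mathbf v'$. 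Reading off coordinates: $x_{n+1}=ma_n+\alpha$ with $\alpha\in[0,a_n)$, and $x_i=p_i-mb_n+\beta$ for $i\le n$ with $\beta\in[0,b_n]$, where moreover $\min(\alpha,\beta)=0$ since $\mathbf v\in[0,a_n)\cdot\e_{n+1}$ or $\mathbf v\in(0,b_n]\cdot\one_n$; define $\alpha',\beta'$ from $\mathbf v'$ analogously. The coordinate-$(n+1)$ inequality $ma_n+\alpha<m'a_n+\alpha'$ together with $\alpha,\alpha'\in[0,a_n)$ forces $m\le m'$. The coordinate-$i$ inequalities for $i\le n$ rearrange to $q_i-p_i>(m'-m)b_n+\beta-\beta'$, and the right-hand side is always $\ge0$: if $m'>m$ then $(m'-m)b_n\ge b_n\ge\beta'-\beta$, while if $m'=m$ then $\alpha<\alpha'$ forces $\alpha'>0$, hence $\beta'=0$, hence the right-hand side equals $\beta\ge0$. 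Therefore $q_i>p_i$ for all $i\le n$, so $\mathbf q$ strictly dominates $\mathbf p$ within $\snake^{n-1}$, contradicting the induction hypothesis.

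The one step that needs care is the coordinate bookkeeping for the decomposition $\x=\mathbf p+m\,w+\mathbf v$: one must verify that the ``step'' vector $\mathbf v$ contributes to coordinate $n+1$ and to coordinates $1,\dots,n$ in a mutually exclusive fashion, which is precisely the fact $\min(\alpha,\beta)=0$ that closes the case $m'=m$, and one must track the half-open conventions $[0,a_n)$ and $(0,b_n]$ correctly. Everything else is elementary linear arithmetic, and neither Lemma~\ref{lem_baton_embed} nor Lemma~\ref{lem_homot} is needed.
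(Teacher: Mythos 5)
Your proof is correct, and while it uses the same recursive decomposition~\eqref{eq_snake_def} and the same induction on $n$ as the paper, the route is genuinely different in one respect: you first eliminate the parameter $t$ by observing that the lemma is equivalent to the $t$-free statement that $\snake^n$ contains no two points with one strictly dominating the other coordinatewise. The paper instead carries $t$ throughout: it assumes $\|\x+t\cdot\one_{n+1}-\y\|_\infty<t$, reads off the last coordinate to deduce $c_{\x}\le c_{\y}$, reads off the first $n$ coordinates and invokes the induction hypothesis (for the translate parameter $t'=b_n(c_{\y}-c_{\x})+w_{\x}-w_{\y}+t$) to deduce $c_{\y}\le c_{\x}$, and then derives a contradiction from $\min(v_{\y},w_{\y})=0$. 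The two arguments agree on the essential points --- the decomposition of $\x$ and $\y$, the coordinate bookkeeping that forces $m\le m'$ (resp.\ $c_{\x}\le c_{\y}$), and the pivotal observation that the $\e_{n+1}$-component and $\one_n$-component of the ``step'' vector cannot both be positive. What your reformulation buys is a cleaner inductive step: the induction hypothesis is invoked in exactly the same form as the statement being proved (no strict coordinatewise domination in $\snake^{n-1}$), whereas the paper's application of the induction hypothesis to bound $|t'|$ implicitly requires a sign check on $t'$ (the case $t'<0$ needs the roles of $\x'$ and $\y'$ swapped). What the paper's version buys is a more compact case structure: it pins down $c_{\x}=c_{\y}$ before looking at $v,w$, rather than splitting on $m'>m$ versus $m'=m$.
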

\begin{proof}
	Let us use induction on $n$. If $n=0$, there is nothing to do, since $|0+t-0|=t$. So, we turn to the induction step. Fix $n \ge 1$, and assume that there exist $\x,\y \in \snake^n$ such that
	\begin{equation} \label{eq_lem_dist_1}
		\|\x+t\cdot\one_{n+1}-\y\|_\infty < t.
	\end{equation}
	We use the definition~\eqref{eq_snake_def} of the snake hypersurface to represent $\x$ as a sum
	\begin{equation*}
		\x=\x'+c_{\x}(a_n\cdot\e_{n+1}-b_n\cdot\one_{n})+v_{\x}\cdot\e_{n+1}+w_{\x}\cdot\one_n,
	\end{equation*}
	where $\x' \in \snake^{n-1}, c_{\x} \in \Z, v_{\x} \in [0,a_n), w_{\x} \in [0,b_n],$ and either $v_{\x}$ or $w_{\x}$ is equal to zero. Then we represent $\y$ similarly.
	
	Considering only the last coordinate, we deduce from~\eqref{eq_lem_dist_1} that
	\begin{equation} \label{eq_lem_dist_2}
		|a_n(c_{\x}-c_{\y})+v_{\x}-v_{\y}+t| < t.
	\end{equation}
	As far as $v_{\x}-v_{\y} > -a_n,$ the last inequality implies that $c_{\x} - c_{\y} \le 0$.
	
	Similarly, one can consider the first $n$ out of $n+1$ coordinates in~\eqref{eq_lem_dist_1} to get that
	\begin{equation*}
		\|\x'+\big( b_n(c_{\y}-c_{\x})+w_{\x}-w_{\y} +t \big)\cdot\one_{n}-\y'\|_{\infty} < t.
	\end{equation*}
	By the induction hypothesis, the last inequality implies that
	\begin{equation} \label{eq_lem_dist_3}
		|b_n(c_{\y}-c_{\x})+w_{\x}-w_{\y} +t|<t.
	\end{equation}
	Since $w_{\x}-w_{\y} \ge -b_n,$ it follows from~\eqref{eq_lem_dist_3} that $c_{\y}-c_{\x} \le 0$.
	
	Hence, if both inequalities~\eqref{eq_lem_dist_2} and~\eqref{eq_lem_dist_3} hold, then $c_{\x}=c_{\y}$. Under this assumption, these inequalities turn into	$|v_{\x}-v_{\y}+t| < t$  and $|w_{\x}-w_{\y} +t|<t$, respectively. In particular, we have $v_{\x}<v_{\y}$ and $w_{\x}<w_{\y}$. However, this is a contradiction, because either $v_{\y}$ or $w_{\y}$ is equal to zero. This establishes the induction step.
	
	So, we have shown that for all $\x,\y \in \snake^n$, we have $\|\x+t\cdot\one_{n+1}-\y\|_\infty \ge t$. Besides, if $\x=\y$, then the last inequality holds with the equality. This proves that the $\ell_\infty$-distance between the hypersurfaces $\snake^n$ and $\snake^n+t\cdot\one_{n+1}$ equals $t$.
\end{proof}

\begin{Lemma} \label{lem_inj}
	For all $n \in \N$ and positive reals $a_1,b_1,\dots,a_n,b_n$, the following two statements are valid. First, the Minkowski sum from the right-hand side of~\eqref{eq_snake_def} is injective. Second, we have
	\begin{equation*}
		\R^{n+1} = \snake^{n}(a_1,b_1,\dots,a_{n},b_{n})+ \R\cdot\one_{n+1},
	\end{equation*}
	ans this sum is also injective.
\end{Lemma}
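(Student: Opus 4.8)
The plan is to prove the two assertions simultaneously by induction on $n$, the base case $n=0$ being trivial since $\R^1 = \{0\}+\R\cdot\one_1$ and this sum is obviously injective. For the inductive step I would first isolate the ``profile curve''
\[
\Gamma := \Z\cdot\{a_n\cdot\e_{n+1}-b_n\cdot\one_n\}+\big([0,a_n)\cdot\e_{n+1}\cup(0,b_n]\cdot\one_n\big),
\]
so that \eqref{eq_snake_def} reads $\snake^n=\snake^{n-1}+\Gamma$ by associativity of the Minkowski sum. Note $\Gamma$ lies in the plane $P:=\mathrm{span}(\e_{n+1},\one_n)$, and the crucial auxiliary claim — which I expect to be the main obstacle — is that $P=\Gamma+\R\cdot\one_{n+1}$ and that this sum is injective. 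This is a purely two-dimensional fact: writing a point of $P$ in the basis $\e_{n+1},\one_n$ as $u\e_{n+1}+s\one_n$, the set $\Gamma$ is a monotone ``staircase'' and the quantity $u-s$ is a bijective parameter along it (the half-open intervals $[0,a_n)$ and $(0,b_n]$ in the definition are exactly what makes consecutive pieces fit together with no overlaps and no gaps); since $u-s$ is constant along every line in direction $\one_{n+1}=\e_{n+1}+\one_n$, each such line meets $\Gamma$ exactly once. I would verify this by explicit bookkeeping: the horizontal piece indexed by $m\in\Z$ contributes the values $u-s\in[m(a_n+b_n),\,m(a_n+b_n)+a_n)$ and the vertical piece contributes $u-s\in[m(a_n+b_n)-b_n,\,m(a_n+b_n))$, and over all $m$ these intervals tile $\R$.

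For the first assertion (injectivity of the sum in \eqref{eq_snake_def}), suppose two elements $\mathbf a+\mathbf b+\mathbf c$ and $\mathbf a'+\mathbf b'+\mathbf c'$ coincide, with $\mathbf a,\mathbf a'\in\snake^{n-1}$, with $\mathbf b=m(a_n\e_{n+1}-b_n\one_n)$ and $\mathbf b'=m'(a_n\e_{n+1}-b_n\one_n)$ for integers $m,m'$, and with $\mathbf c,\mathbf c'$ in the arm $[0,a_n)\cdot\e_{n+1}\cup(0,b_n]\cdot\one_n$. Comparing last coordinates, each side has last coordinate in $[ma_n,(m+1)a_n)$, respectively $[m'a_n,(m'+1)a_n)$, which forces $m=m'$, hence $\mathbf b=\mathbf b'$ and $\mathbf a+\mathbf c=\mathbf a'+\mathbf c'$. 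A short case analysis on which arm $\mathbf c$ and $\mathbf c'$ lie in then finishes it: if both lie on the $\e_{n+1}$-arm, last coordinates give $\mathbf c=\mathbf c'$ and then $\mathbf a=\mathbf a'$; if both lie on the $\one_n$-arm, then $\mathbf a+w\one_n=\mathbf a'+w'\one_n$ and the inductive injectivity of $\snake^{n-1}+\R\cdot\one_n$ gives $\mathbf a=\mathbf a'$ and $w=w'$; and the mixed case is impossible, since it would force the $\e_{n+1}$-coordinate to vanish and then, by the same inductive injectivity, the $\one_n$-coordinate to vanish as well, contradicting that it lies in $(0,b_n]$.

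Finally, for the second assertion, surjectivity follows by chaining the pieces: using $\snake^n=\snake^{n-1}+\Gamma$, then the auxiliary claim $\Gamma+\R\cdot\one_{n+1}=P$, then $P=\R\cdot\e_{n+1}\oplus\R\cdot\one_n$, we get
\[
\snake^n+\R\cdot\one_{n+1}=\snake^{n-1}+\Gamma+\R\cdot\one_{n+1}=\snake^{n-1}+P=(\snake^{n-1}+\R\cdot\one_n)+\R\cdot\e_{n+1}=\R^n+\R\cdot\e_{n+1}=\R^{n+1},
\]
where the next-to-last equality is the inductive hypothesis. For injectivity I would either invoke Lemma~\ref{lem_dist} directly — for $\tau\neq 0$ the translate $\snake^n+\tau\cdot\one_{n+1}$ is disjoint from $\snake^n$, which is precisely what is needed — or argue internally: given $\mathbf s+t\cdot\one_{n+1}=\mathbf s'+t'\cdot\one_{n+1}$ with $\mathbf s,\mathbf s'\in\snake^n$, decompose $\mathbf s=\mathbf a+\gamma$ and $\mathbf s'=\mathbf a'+\gamma'$ with $\mathbf a,\mathbf a'\in\snake^{n-1}$ and $\gamma,\gamma'\in\Gamma$ (this decomposition is unique by the first assertion), note that $\mathbf a-\mathbf a'\in\R^n\cap P=\R\cdot\one_n$ so $\mathbf a=\mathbf a'$ by the inductive hypothesis, and then $\gamma+t\cdot\one_{n+1}=\gamma'+t'\cdot\one_{n+1}$ forces $\gamma=\gamma'$ and $t=t'$ by the injectivity half of the auxiliary claim.
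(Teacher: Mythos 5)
Your proof is correct, but it takes a genuinely different route from the paper. You isolate the two-dimensional ``profile curve'' $\Gamma$ in the plane $P=\mathrm{span}(\e_{n+1},\one_n)$ and verify by explicit bookkeeping (via the bijective parameter $u-s$, which is constant along $\one_{n+1}$-lines) that $\Gamma+\R\cdot\one_{n+1}=P$ injectively, then handle the injectivity of $\snake^{n-1}+\Gamma$ by a case analysis on which ``arm'' the $\Gamma$-components lie in. The paper instead exploits the slack in the term $\R\cdot\one_{n+1}$ to perform two successive substitutions — replacing $[0,a_n)\cdot\e_{n+1}$ by $(-a_n,0]\cdot\one_n$, and $\Z\cdot\{a_n\e_{n+1}-b_n\one_n\}$ by $\Z\cdot\{-(a_n+b_n)\one_n\}$ — which collapse the whole profile to $\R\cdot\one_n$ in a single stroke; the injectivity of all the intermediate decompositions then falls out automatically from the injectivity of the final, obviously injective, sum $\R^n+\R\cdot\one_{n+1}$. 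Your version is more explicit and self-contained (it identifies the geometric content of the half-open intervals directly) but requires more case-by-case verification; the paper's version is slicker but less transparent about why the intervals are chosen as they are. Both establish the two claims by a joint induction on $n$. One small efficiency remark: your ``Option 1'' for the injectivity of $\snake^n+\R\cdot\one_{n+1}$ — citing Lemma~\ref{lem_dist}, which does precede this lemma in the paper — is perfectly legitimate and shorter than your ``Option 2.''
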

\begin{proof}
	As in the case of previous lemmas, the proof here is by induction on $n$. The base case $n=0$ is immediate. Indeed, the first statement is degenerate in this case, while the second one is trivial, since the sum $\R = \{0\}+\R\cdot\one_1$ is clearly injective. So, we turn to the induction step.
	
	Assume that $n\ge 1$ and consider the Minkowski sum
	\begin{equation} \label{eq_lem_inj_1}
		\Sigma \coloneqq \snake^{n-1} +\Z\cdot\{a_n\cdot\e_{n+1} -b_n\cdot\one_n\} +[0,a_n)\cdot\e_{n+1}\cup(0,b_n]\cdot\one_n + \R\cdot\one_{n+1}.
	\end{equation}
	
	Observe that the last term of the right-hand side of~\eqref{eq_lem_inj_1} allows us to subtract an arbitrary vector collinear to $\one_{n+1}$ from any point of the preceding terms of the Minkowski sum. Moreover, this operation does not change whether the sum is injective or not. In particular, for all $v \in [0,a_n)$, we can replace $v\cdot\e_{n+1}$ by $v\cdot\e_{n+1}-v\cdot\one_{n+1} = -v\cdot\one_{n}$. So, we replace $[0,a_n)\cdot\e_{n+1}$ from the penultimate term of~\eqref{eq_lem_inj_1} by $(-a_n,0]\cdot\one_{n}$ and conclude that
	\begin{equation*} \label{eq_lem_inj_2}
		\Sigma =\ \snake^{n-1} +\Z\cdot\{a_n\cdot\e_{n+1}-b_n\cdot\one_n\}+(-a_n,b_n]\cdot\one_{n} + \R\cdot\one_{n+1}.
	\end{equation*}
	
	Similarly, we replace $\Z\cdot\{a_n\cdot\e_{n+1}-b_n\cdot\one_n\}$ by $\Z\cdot\{-(a_n+b_n)\cdot\one_{n}\}$ in the right-hand side of the last equality. Then, observe that
	\begin{equation*}
		\Z\cdot\{-(a_n+b_n)\cdot\one_{n}\} + (-a_n,b_n]\cdot\one_{n} = \R\cdot\one_{n}
	\end{equation*}
	and this sum is injective. Therefore,
	\begin{equation*}
		\Sigma =\ \snake^{n-1} +\R\cdot\one_{n} + \R\cdot\one_{n+1} = \R^n+ \R\cdot\one_{n+1},
	\end{equation*}
	where the last equality hold by induction. Finally, it is easy to check that $\R^n+ \R\cdot\one_{n+1} = \R^{n+1}$ and this sum is injective.
	
	Summing up, we have seen that the Minkowski sum~\eqref{eq_lem_inj_1} is equal to $\R^{n+1}$ and injective. Taking into account the definition~\eqref{eq_snake_def} of the snake hypersurface, this observations yields both the desired statements and completes the proof. 
\end{proof}

Lemma~\ref{lem_inj} implies that
\begin{equation} \label{eq_Snake_0}
	\R^{n+1} = \snake^n + \R\cdot\one_{n+1} = \snake^n + [0,1)\cdot\one_{n+1}+\Z\cdot\one_{n+1} = \Snake^n + \Z\cdot\one_{n+1},
\end{equation}
where
\begin{equation} \label{eq_Snake_1}
	\Snake^n = \Snake^n(a_1,b_1,\dots,a_{n},b_{n}) = \snake^n(a_1,b_1,\dots,a_{n},b_{n}) + [0,1)\cdot\one_{n+1},
\end{equation}
and all these sums are injective.

The following technical lemma describes the `contours' of $\Snake^n$, i.e., its intersections
\begin{equation} \label{eq_Snake_cont}
	\Snake^n(h) \coloneqq \big\{(x_1,\dots,x_n) \in \R^n : (x_1,\dots,x_n,h) \in \Snake^n\big\}
\end{equation}
with the hyperplanes $x_{n+1} = h$ in case $a_n>1$.

\begin{Lemma} \label{lem_contour}
	For all $n \in \N$, positive reals $a_1,b_1,\dots,a_n,b_n$ such that $a_n>1$, and for all $m \in \Z$, the following holds. First, if $h \in [ma_n,ma_n+1)$, then
	\begin{equation} \label{eq_lem_contour_s1}
		\Snake^n(h) = \snake^{n-1}-mb_n\cdot\one_{n}+[0,b_n+1)\cdot\one_{n}.
	\end{equation}
	Second, if $h \in [ma_n+1,ma_n+a_n)$, then
	\begin{equation} \label{eq_lem_contour_s2}
		\Snake^n(h) = \snake^{n-1}-mb_n\cdot\one_{n}+[0,1)\cdot\one_{n}.
	\end{equation}
\end{Lemma}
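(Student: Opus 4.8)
The plan is to unfold the definitions of $\Snake^n$ and $\snake^n$ a single time and then read off the last coordinate; no induction on $n$ is needed, since $\snake^{n-1}$ is carried along untouched. By \eqref{eq_Snake_1} and \eqref{eq_snake_def}, every point $\z \in \Snake^n$ can be written as
$$\z = \x' + m'\bigl(a_n\cdot\e_{n+1}-b_n\cdot\one_n\bigr) + u + s\cdot\one_{n+1},$$
where $\x' \in \snake^{n-1} \subset \R^n$, $m' \in \Z$, $s \in [0,1)$, and $u$ lies either in $[0,a_n)\cdot\e_{n+1}$ (call this branch~A, writing $u = v\cdot\e_{n+1}$ with $v \in [0,a_n)$) or in $(0,b_n]\cdot\one_n$ (branch~B, writing $u = w\cdot\one_n$ with $w \in (0,b_n]$). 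Taking the $(n+1)$-st coordinate of $\z$ gives $h = m'a_n + v + s$ in branch~A and $h = m'a_n + s$ in branch~B, while the vector of the first $n$ coordinates of $\z$ equals $\x' - m'b_n\cdot\one_n + s\cdot\one_n$ in branch~A and $\x' - m'b_n\cdot\one_n + (w+s)\cdot\one_n$ in branch~B. Hence $\Snake^n(h)$ is the union, over all admissible tuples, of translates of $\snake^{n-1}$ of the form $\snake^{n-1} - m'b_n\cdot\one_n + (\text{shift})\cdot\one_n$.

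Next I would determine, for a fixed $h$, which integers $m'$ and which branches are admissible, and what the resulting shift range is; this is exactly where the hypothesis $a_n > 1$ enters. Write $h = ma_n + \tau$ with $m \in \Z$ and $\tau \in [0,a_n)$. In branch~A one needs $v + s = h - m'a_n \in [0, a_n + 1)$, so $m'$ ranges over an interval of length $1 + 1/a_n < 2$; this forces $m' \in \{m, m-1\}$, with $m' = m-1$ possible only when $\tau < 1$. In branch~B one needs $h \in [m'a_n, m'a_n+1)$, and since $a_n>1$ these intervals are pairwise disjoint, which forces $m' = m$ and $\tau < 1$. For each admissible choice one solves the single linear constraint for $s$ (and for $v$, resp.\ $w$) and records the shift range: when $\tau \in [0,1)$ the three contributions --- branch~A with $m' = m$, branch~B, and branch~A with $m' = m-1$ --- yield the shift sets $[0,\tau]$, $(\tau, b_n+\tau]$, and $(b_n+\tau, b_n+1)$ (the latter two written relative to the common translate $\snake^{n-1} - mb_n\cdot\one_n$, since passing from $m'=m$ to $m'=m-1$ moves the base by $b_n\cdot\one_n$), whose union is precisely $[0, b_n+1)$; when $\tau \in [1, a_n)$ only branch~A with $m' = m$ survives and the shift ranges over $[0,1)$. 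Substituting $\tau = h - ma_n$ back then yields \eqref{eq_lem_contour_s1} and \eqref{eq_lem_contour_s2}.

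The one genuinely delicate step is the endpoint bookkeeping: one must keep careful track of which ends of the three shift intervals are open and which are closed --- these come from the half-open intervals $[0,a_n)$, $(0,b_n]$, $[0,1)$ appearing in \eqref{eq_snake_def} together with the strict inequality $\tau < 1$ --- so that in the first case the pieces $[0,\tau]$, $(\tau, b_n+\tau]$, $(b_n+\tau, b_n+1)$ glue together into the half-open interval $[0, b_n+1)$ with neither a gap nor an overlap. Everything else is a routine expansion of the Minkowski sums.
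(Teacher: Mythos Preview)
Your approach is essentially the same as the paper's: both unfold the definitions once, write a generic point of $\Snake^n$ using the decomposition \eqref{eq_snake_def}--\eqref{eq_Snake_1}, isolate the last coordinate to pin down the integer parameter (your $m'$, the paper's $c_{\x}$), and then read off the admissible shift along $\one_n$. The only cosmetic differences are that you parametrize the two pieces of $[0,a_n)\cdot\e_{n+1}\cup(0,b_n]\cdot\one_n$ as disjoint ``branches A/B'' (with $v\in[0,a_n)$ and $w\in(0,b_n]$), whereas the paper uses a single pair $(v_{\x},w_{\x})$ with one of them zero, and that you handle the two ranges of $h$ uniformly via $\tau=h-ma_n$ while the paper treats them separately; this leads to slightly different but equivalent partitions of $[0,b_n+1)$ (your $[0,\tau]\cup(\tau,b_n+\tau]\cup(b_n+\tau,b_n+1)$ versus the paper's $I_3\cup I_2\cup I_1$).
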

\begin{proof}
	We begin with the proof of the second statement.
	
	Fix $m \in \Z$ and $h \in [ma_n+1,ma_n+a_n)$. Recall that it follows from~\eqref{eq_snake_def} and~\eqref{eq_Snake_1} that each point $\x \in \Snake^n$ has a unique representation as a sum
	\begin{equation} \label{eq_lem_contour_1}
		\x=\x'+c_{\x}(a_n\cdot\e_{n+1}-b_n\cdot\one_{n})+v_{\x}\cdot\e_{n+1}+w_{\x}\cdot\one_n +u_{\x}\cdot\one_{n+1},
	\end{equation}
	where $\x' \in \snake^{n-1}, c_{\x} \in \Z, u_{\x} \in [0,1), v_{\x} \in [0,a_n), w_{\x} \in [0,b_n],$ and either $v_{\x}$ or $w_{\x}$ is equal to zero.
	
	The last coordinate of such $\x$ is equal to
	\begin{equation} \label{eq_lem_contour_21}
		x_{n+1}=c_{\x}a_n+v_{\x}+u_{\x},
	\end{equation}
	while the fist $n$ its coordinates are
	\begin{equation} \label{eq_lem_contour_22}
		(x_1,\dots,x_n)=\x'-c_{\x}b_n\cdot\one_{n}+(w_{\x}+u_{\x})\cdot\one_{n}.
	\end{equation}

	Observe that if $c_{\x}>m$, then we have $x_{n+1}\ge(m+1)a_n>h$. Similarly, if $c_{\x}<m$, then we have $x_{n+1}<(m-1)a_n+a_n+1\le h$. Thus, $x_{n+1}=h$ if and only if $c_{\x}=m, u_{\x} \in [0,1)$, $v_{\x} = h-ma_n-u_{\x} > 0$. Clearly, the last inequality implies that $w_{\x} = 0$. Now it follows from~\eqref{eq_lem_contour_22} that the first $n$ coordinates of such points form the set from the right-hand side of~\eqref{eq_lem_contour_s2}.
	
	Let us move on the fist part of the lemma. Fix $m \in \Z$ and $h \in [ma_n,ma_n+1)$. We use the same representation~\eqref{eq_lem_contour_1} of an arbitrary $\x \in \Snake^n$ as before. Again, if $c_{\x}>m$, then \eqref{eq_lem_contour_21} implies that $x_{n+1}\ge(m+1)a_n>h$. Similarly, if $c_{\x}<m-1$, then we have $x_{n+1}<(m-2)a_n+a_n+1< h$. However, there are two options for $c_{\x}$ this time: if $\x \in \Snake^n(h)$, then either $c_{\x}=m-1,$ or $c_{\x}=m$.
	
	In the former case, $x_{n+1}=h$ if and only if $v_{\x}+u_{\x} = h-(m-1)a_n$. Observe that if $u_{\x}\le h-ma_n$, then $v_{\x} = h-(m-1)a_n - u_{\x} \ge a_n$, which is impossible. At the same time, if $h-ma_n < u_{\x} < 1$, then we have $0<v_{\x}<a_n$ as required. Moreover, $w_{\x}=0$ for all such $\x$ since $v_{\x}$ is positive. Thus, it follows from~\eqref{eq_lem_contour_22} that the first $n$ coordinates of the points $\x \in \Snake^n$ such that $x_{n+1}=h$ and $c_{\x}=m-1$ form the set $S_1$ defined by
	\begin{equation*} \label{eq_lem_contour_31}
		S_1 = \snake^{n-1}-(m-1)b_n\cdot\one_{n}+(h-ma_n,1)\cdot\one_{n} = \snake^{n-1}-mb_n\cdot\one_{n} + I_1\cdot\one_{n}, 
	\end{equation*}
	where
	\begin{equation*}
		I_1 = (h-ma_n+b_n, b_n+1).
	\end{equation*}
	
	Now let us consider the other case, namely, $c_{\x}=m$. By~\eqref{eq_lem_contour_21}, we have $x_{n+1}=h$ if and only if $v_{\x}+u_{\x} = h-ma_n$. As far as $0\le h-ma_n<1$, there are two options to satisfy the last equation. The first one is to put $u_{\x} = h-ma_n, v_{\x}=0$, and thus the value of $w_{\x} \in [0,b_n]$ may be arbitrary. It follows from~\eqref{eq_lem_contour_22} that the first $n$ coordinates of such points form the set
	\begin{equation*}
		S_2 = \snake^{n-1}-mb_n\cdot\one_{n} + I_2\cdot\one_{n}, \mbox{ where } I_2 = [h-ma_n,h-ma_n+b_n].
	\end{equation*}
	The second option\footnote{Note that this option may be degenerate if $h=ma_n$.} is to take an arbitrary $u_{\x} \in [0,h-ma_n)$ and to put $v_{\x} = h-ma_n-u_{\x} > 0$. We shall have $w_{\x}=0$ for all such points since $v_{\x}$ is positive. Hence, by~\eqref{eq_lem_contour_22}, the first $n$ coordinates of these points form the set
	\begin{equation*}
		S_3 = \snake^{n-1}-mb_n\cdot\one_{n} + I_3\cdot\one_{n}, \mbox{ where } I_3 = [0,h-ma_n).
	\end{equation*}

	After considering all these possibilities, we conclude that
	\begin{equation*}
		\Snake^n(h) = S_1\cup S_2 \cup S_3 = \snake^{n-1}-mb_n\cdot\one_{n} + (I_1\cup I_2 \cup I_3)\cdot\one_{n}.
	\end{equation*}
	Finally, it is easy that $I_3\cup I_2 \cup I_1 = [0,b_n+1)$. This yields~\eqref{eq_lem_contour_s1} and finishes the proof.
\end{proof}

\begin{Corollary} \label{cor}
	In the notation of Lemma~\ref{lem_contour}, for all $m \in \Z$, $h \in [ma_n-a_n+1,ma_n+a_n)$, we have
	\begin{equation*}
		\Snake^n(h) \subset \Snake^n(ma_n) = \snake^{n-1}-mb_n\cdot\one_{n}+[0,b_n+1)\cdot\one_{n}.
	\end{equation*} 
\end{Corollary}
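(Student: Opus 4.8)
The plan is to cover the half-open range $h \in [ma_n - a_n + 1, ma_n + a_n)$ by the three consecutive intervals
$$[ma_n - a_n + 1, ma_n), \quad [ma_n, ma_n+1), \quad [ma_n+1, ma_n+a_n),$$
and to identify $\Snake^n(h)$ on each of them using Lemma~\ref{lem_contour} (recall that $a_n > 1$ is in force, so all three intervals are nonempty and their union is exactly the claimed range). The first of these is $[(m-1)a_n + 1, (m-1)a_n + a_n)$, so the second statement of Lemma~\ref{lem_contour}, applied with $m-1$ in place of $m$, gives $\Snake^n(h) = \snake^{n-1} - (m-1)b_n\cdot\one_{n} + [0,1)\cdot\one_{n}$ there. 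On the second interval the first statement of Lemma~\ref{lem_contour} gives $\Snake^n(h) = \snake^{n-1} - mb_n\cdot\one_{n} + [0,b_n+1)\cdot\one_{n}$; in particular, taking $h = ma_n$ shows that $\Snake^n(ma_n)$ equals the set on the right-hand side of the corollary, so it remains only to check the inclusion into this set. On the third interval the second statement of Lemma~\ref{lem_contour} gives $\Snake^n(h) = \snake^{n-1} - mb_n\cdot\one_{n} + [0,1)\cdot\one_{n}$.

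Next I would verify the three containments into $\Snake^n(ma_n) = \snake^{n-1} - mb_n\cdot\one_{n} + [0,b_n+1)\cdot\one_{n}$. For the second interval the description is an equality, hence the containment is trivial. For the third interval, since $b_n > 0$ we have $[0,1) \subseteq [0,b_n+1)$, and adding the common translate $\snake^{n-1} - mb_n\cdot\one_{n}$ preserves the inclusion. For the first interval I would first rewrite the contour relative to the same base point,
$$\snake^{n-1} - (m-1)b_n\cdot\one_{n} + [0,1)\cdot\one_{n} = \snake^{n-1} - mb_n\cdot\one_{n} + [b_n, b_n+1)\cdot\one_{n},$$
and then use $[b_n, b_n+1) \subseteq [0,b_n+1)$ (again because $b_n > 0$) to conclude. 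Taking the union of the three cases yields $\Snake^n(h) \subseteq \Snake^n(ma_n)$ for every $h$ in the stated range.

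There is no genuine obstacle here: the statement is a bookkeeping consequence of Lemma~\ref{lem_contour}. The only points needing a little care are the index shift $m \mapsto m-1$ required to bring the leftmost subinterval into the form handled by the lemma, the check that the three subintervals indeed tile $[ma_n - a_n + 1, ma_n + a_n)$ (which is exactly where the hypothesis $a_n > 1$ enters), and expressing all three contour descriptions relative to the common base point $-mb_n\cdot\one_{n}$ before comparing the interval factors, so that the set inclusions reduce to the elementary inclusions $[0,1) \subseteq [0,b_n+1)$ and $[b_n, b_n+1) \subseteq [0,b_n+1)$.
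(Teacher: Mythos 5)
Your proof is correct, and it is precisely the case analysis the paper implicitly intends: the corollary is stated without proof as an immediate consequence of Lemma~\ref{lem_contour}, and your decomposition of $[ma_n-a_n+1,\,ma_n+a_n)$ into the three subintervals $[(m-1)a_n+1,(m-1)a_n+a_n)$, $[ma_n,ma_n+1)$, $[ma_n+1,ma_n+a_n)$, together with the index shift $m\mapsto m-1$ and the rebasing to $-mb_n\cdot\one_n$, is exactly the bookkeeping needed. Nothing further to add.
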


\section{Proof of Theorem~\ref{th_main}} \label{sec_main_proof}

In the present section we construct a special two-coloring of the space, a {\it snake coloring}, and prove that it has the desired property. The construction is based on the notion of a snake hypersurface, so we use the notation from the previous section.

Given $n \in \N$, put, with a foresight,
\begin{equation} \label{eq_th_main_param}
	a_i = a_i(n) \coloneqq \frac{7}{4}\cdot(5^n-5^{n-i}), \ b_i = b_i(n) \coloneqq 4\cdot5^{n-i}, \ i=0,\dots,n.
\end{equation}
For all $m \le n$, let us denote the snake hypersurface $\snake^m(a_1,b_1,\dots,a_m,b_m)$ by $\snake^m_n$ for a shorthand. According to~\eqref{eq_Snake_1}, put $\Snake^n \coloneqq \snake^n_n+[0,1)\cdot\one_{n+1}$. Recall that by~\eqref{eq_Snake_0}, we have 
$\R^{n+1} = \Snake^n + \Z\cdot\one_{n+1}$ and this sum is injective. Therefore, $\R^{n+1}$ can be represented as a disjoint union
\begin{equation*}
	\R^{n+1} = \mathfrak{R} \sqcup \mathfrak{B},
\end{equation*}
where
\begin{equation*}
	\mathfrak{R} \coloneqq \Snake^n + \{2z: z \in \Z\}\cdot\one_{n+1}, \ \ \mathfrak{B} \coloneqq \Snake^n + \{2z+1: z \in \Z\}\cdot\one_{n+1}.
\end{equation*}
Finally, let us color all points of $\mathfrak{R}$ and $\mathfrak{B}$ red and blue, respectively. We illustrate this coloring in case $n=1$ in Figure~\ref{Fig2}.

\begin{figure}[h]
	\centering
	\includegraphics[scale=0.5]{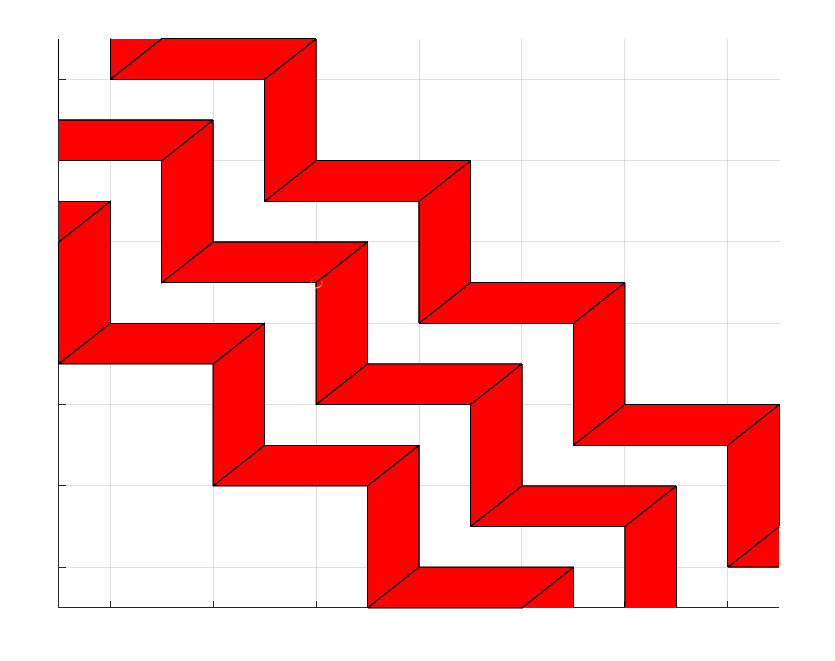}
	\caption{Part of the snake coloring of the plane}
	\label{Fig2}
\end{figure}

In order to prove Theorem~\ref{th_main}, we shall show that there are no monochromatic $\ell_\infty$-isometric copies of all batons $\B(\lambda_1,\dots, \lambda_k)$ such that $\max_t\lambda_t \le 1$ and $\sum_{t=1}^k\lambda_t \ge 5^{n+1}$. In view of the symmetry between $\mathfrak{R}$ and $\mathfrak{B}$, it is sufficient to prove the absence of only such {\it red} copies. Besides, from Lemma~\ref{lem_dist} it follows that for all $z,z' \in \Z$, $\x \in \Snake^n+2z\cdot\one_{n+1}$, $\y \in \Snake^n+2z'\cdot\one_{n+1}$, we have $\|\x-\y\|_\infty > 2\cdot|z-z'|-1$. In particular, the $\ell_\infty$-distance between any two points of different translates of $\Snake^n$ in $\mathfrak{R}$ is strictly greater than one. So, since $\max_t\lambda_t \le 1$, any red $\ell_\infty$-isometric copy of $\B(\lambda_1,\dots, \lambda_k)$ must lie entirely within some translate $\Snake^n+2z\cdot\one_{n+1}$ of $\Snake^n$ in $\mathfrak{R}$. Moreover, since $5^{n+1} > \frac{7}{4}\cdot5^n-\frac{3}{4} = a_n+1$, the following proposition finishes the proof of Theorem~\ref{th_main}.

\begin{Proposition} \label{prop_main}
	In the notation of this section, given $n \in \N$ and positive reals $\lambda_1,\dots,\lambda_k$, if $\max_t\lambda_t \le 1$ and $\sum_{t=1}^k\lambda_t \ge a_n+1$, then $\Snake^n$ contains no $\ell_\infty$-isometric copy of $\B = \B(\lambda_1,\dots, \lambda_k)$.
\end{Proposition}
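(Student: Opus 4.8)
The plan is to prove Proposition~\ref{prop_main} by induction on $n$. The base case $n=0$ is immediate, since $\Snake^0=\{0\}+[0,1)\cdot\one_1=[0,1)$ has diameter $<1=a_0+1$. For the inductive step, assume $n\ge 1$, suppose $\x^0,\dots,\x^k\in\Snake^n$ form an $\ell_\infty$-isometric copy of $\B=\B(\lambda_1,\dots,\lambda_k)$ with $\max_t\lambda_t\le 1$ and $\sigma_k:=\sum_t\lambda_t\ge a_n+1$, and seek a contradiction. I will use two arithmetic features of the parameters~\eqref{eq_th_main_param}: first $b_n=4$, so $b_n+1=5$; second $a_i(n)=5\,a_i(n-1)$ and $b_i(n)=5\,b_i(n-1)$ for $i<n$, which by Lemma~\ref{lem_homot} gives $\snake^m_n=5\cdot\snake^m_{n-1}$ for $m\le n-1$. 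Together these turn every ``central contour'' into a rescaled lower‑dimensional band: $\Snake^n(m a_n)=\snake^{n-1}_n-m b_n\cdot\one_n+[0,5)\cdot\one_n=5\cdot\Snake^{n-1}-m b_n\cdot\one_n$ for all $m\in\Z$, where $\Snake^{n-1}$ carries the dimension-$(n-1)$ parameters. I will also use the following consequence of Lemma~\ref{lem_dist} in dimension $n-1$: for $\z,\z'\in\snake^{n-1}_n$ and $t\ne 0$ one has $\|\z-\z'-t\cdot\one_n\|_\infty\ge|t|$. Equivalently, writing each $\p\in\R^n$ uniquely (by Lemma~\ref{lem_inj}) as $\p=\z+f(\p)\cdot\one_n$ with $\z\in\snake^{n-1}_n$, the coefficient map $f$ is $1$-Lipschitz in the $\ell_\infty$-metric.

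By Lemma~\ref{lem_baton_embed} the copy has some direction $i_0\le n+1$; I split into cases. Suppose first $i_0=n+1$, so the sequence $h_s:=x^s_{n+1}$ is a translation or reflection of $\B$; by symmetry say it is a translation, so $h_0<\dots<h_k$ with $h_k-h_0=\sigma_k\ge a_n+1$. Put $m_0=\lfloor h_0/a_n\rfloor$, so $h_k\ge(m_0+1)a_n+1$; let $s_1$ be the last index with $h_{s_1}<(m_0+1)a_n$ and $s_2$ the first index with $h_{s_2}\ge(m_0+1)a_n+1$. Since consecutive steps change $h$ by $\lambda_s\le 1$ and $a_n\ge 7$, one gets $h_{s_1}\in[(m_0+1)a_n-1,(m_0+1)a_n)\subset[m_0 a_n+1,(m_0+1)a_n)$ and $h_{s_2}\in[(m_0+1)a_n+1,(m_0+1)a_n+2)\subset[(m_0+1)a_n+1,(m_0+2)a_n)$, i.e.\ these heights lie in the ``thin'' ranges of levels $m_0$ and $m_0+1$ from the second part of Lemma~\ref{lem_contour}. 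Hence for the projections $\y^s:=(x^s_1,\dots,x^s_n)$ we have $f(\y^{s_1})\in[-m_0 b_n,-m_0 b_n+1)$ and $f(\y^{s_2})\in[-(m_0+1)b_n,-(m_0+1)b_n+1)$, so $|f(\y^{s_1})-f(\y^{s_2})|>b_n-1=3$. On the other hand $\|\y^s-\y^{s-1}\|_\infty\le\|\x^s-\x^{s-1}\|_\infty=\lambda_s=h_s-h_{s-1}$, whence $|f(\y^{s_2})-f(\y^{s_1})|\le\sum_{s_1<s\le s_2}(h_s-h_{s-1})=h_{s_2}-h_{s_1}<3$ --- a contradiction (this is exactly where $b_n=4$ is used).

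Now suppose $i_0\le n$. Then by Lemma~\ref{lem_baton_embed} the projections $\y^0,\dots,\y^k$ already form an $\ell_\infty$-isometric copy of $\B$ in $\R^n$, with $\y^s\in\Snake^n(h_s)$ where again $h_s=x^s_{n+1}$, now satisfying only $|h_s-h_{s-1}|\le\lambda_s\le 1$. If the set $\{h_s\}$ spans a range of length at most $a_n-1$, then (choosing $m$ so that $h_s\in[(m-1)a_n+1,(m+1)a_n)$ for all $s$, which is possible since that window has length $2a_n-1$) Corollary~\ref{cor} places every $\y^s$ in $\Snake^n(m a_n)=5\cdot\Snake^{n-1}-m b_n\cdot\one_n$. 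Rescaling by $1/5$ and translating by $\tfrac{m b_n}{5}\one_n$ turns $\y^0,\dots,\y^k$ into an $\ell_\infty$-copy of $\B(\lambda_1/5,\dots,\lambda_k/5)$ lying inside $\Snake^{n-1}$, so the induction hypothesis forces $\sigma_k/5<a_{n-1}(n-1)+1$, i.e.\ $\sigma_k<5\,a_{n-1}(n-1)+5=a_{n-1}(n)+5$. But $a_n(n)+1=\tfrac{7}{4}\cdot 5^n-\tfrac34>\tfrac{7}{4}\cdot 5^n-\tfrac{15}{4}=a_{n-1}(n)+5$, contradicting $\sigma_k\ge a_n+1$.

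The one remaining situation --- $i_0\le n$ while the last coordinates $\{h_s\}$ still span more than $a_n-1$ --- is the crux, and the step I expect to be the main obstacle. As in the case $i_0=n+1$ the baton must ``climb'' through an entire thin region of $\Snake^n$, so the coefficient $f(\y^s)$ has to traverse a gap of size $\approx b_n$ between two thin bands; but now $h_s$ need not be monotone and may oscillate back and forth through the intervening ``thick'' transition zone, so the clean budget bound ``$h_{s_2}-h_{s_1}<3$'' of the first case is no longer available. Resolving this should combine the full contour description of Lemma~\ref{lem_contour} with the three Lipschitz constraints $\|\y^s-\y^{s-1}\|_\infty\le\lambda_s$, $|h_s-h_{s-1}|\le\lambda_s$, and the fact that the working coordinate $x_{i_0}$ advances \emph{monotonically} by $\sigma_k$ in total; the cleanest route I foresee is a secondary induction on the number of distinct ``levels'' (values of $\lfloor h_s/a_n\rfloor$) that the baton visits, each single‑level contribution being controlled by the ``fits in a window'' argument above, with the level transitions charged against the monotone progress of $x_{i_0}$. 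Making that bookkeeping close against the slack in the inequality $a_{n-1}(n)+5<a_n(n)+1$ is the delicate point, and is where I would expect to spend most of the effort.
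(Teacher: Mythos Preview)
Your handling of direction $n{+}1$ and of the ``small height-span'' sub-case for $i_0\le n$ is essentially the paper's argument; your $1$-Lipschitz coefficient map $f$ is just a repackaging of Lemma~\ref{lem_dist}, and the identification $\Snake^n(ma_n)=5\cdot\Snake^{n-1}-mb_n\cdot\one_n$ is exactly how the paper sets up the induction. The genuine gap is the final case ($i_0\le n$ with large height-span), which you flag as ``the crux'' and leave open, proposing a secondary induction over visited levels with transitions charged against the monotone coordinate $x_{i_0}$. This is unnecessary, and in fact $x_{i_0}$ plays no role at all in this step.

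The paper closes this case in a few lines by passing to a \emph{sub-baton}. Choose $l$ with $h_l=\min_s h_s$ and $m$ with $(m-1)a_n+1\le h_l<ma_n+1$; we are assuming some $h_s\ge(m+1)a_n$. Walking from $l$ along the baton toward such an index (say in the direction of increasing indices), let $r$ be the last index with $h_s<(m+1)a_n$ for all $l\le s\le r$, so that $h_{r+1}\ge(m+1)a_n$. By Corollary~\ref{cor} the projections $\y^l,\dots,\y^r$ all lie in $\Snake^n(ma_n)=5\cdot\Snake^{n-1}-mb_n\cdot\one_n$, and they form an $\ell_\infty$-copy of a baton $\B'$ with steps $\le 1$. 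Its diameter satisfies
\[
\mathrm{diam}(\B')=\|\x^{r}-\x^{l}\|_\infty\ \ge\ \|\x^{r+1}-\x^{l}\|_\infty-\lambda_{r+1}\ \ge\ (h_{r+1}-h_l)-1\ >\ a_n-2\ =\ 5\,a_{n-1}(n{-}1)+5,
\]
so after rescaling by $1/5$ one contradicts the induction hypothesis inside $\Snake^{n-1}$. The point you missed is that you may throw away most of the original baton: the large change in the \emph{last} coordinate across the single escaping step already forces the retained piece to be long enough, so no multi-level bookkeeping or use of the monotone working coordinate is required.
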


The proof of this proposition is trivial if $n=0$. Indeed, in this case, $\Snake^0$ is just a half-opened interval $[0,1) \subset \R$ which clearly does not contain an isometric copy of $\B$ because $\mbox{diam}(\B)= \sum_{t=1}^k\lambda_t \ge a_0+1 = 1$.

Let us also illustrate the proof in case $n=1$ with the aid of Figure~\ref{Fig2}. It is easy to see that $\Snake^1$, which looks like a single red `snake', does not contain a {\it vertical} isometric copy of $\B$ since $\sum_{t=1}^k\lambda_t \ge a_1+1$. Indeed, the condition $\max_t\lambda_t \le 1$ makes the potential isometric copy too narrow to `jump' between different half-open vertical segments of the snake, and thus it has to fit within a single one of them of length $a_1+1$. By the same reason, $\Snake^1$ does not contain a {\it horizontal} isometric copy of $\B$ as well because $\sum_{t=1}^k\lambda_t \ge b_1+1$ (note that  $a_1=7 > 4=b_1$ by definition). However, it would be better to say, with a foresight, that $\Snake^1$ does not contain such isometric copies of $\B$ `by the induction hypothesis' since the case $n=0$ yields this upper bound on the maximal length of a baton inside a horizontal segment $[0,b_1+1)$. This concludes the proof of Proposition~\ref{prop_main} in case $n=1$ as far as there are only these two possible {\it directions} for an isometric copy of $\B$ (see Section~\ref{sec_baton}).

In the rest of this section we extend the argument from the previous paragraph and prove Proposition~\ref{prop_main} by induction on $n$ using only the trivial case $n=0$ as the base of induction\footnote{As a byproduct, this would provide a more formal proof for the case $n=1$ considered above.}.

So, let us assume that $n \ge 1$. Our argument consists of two steps, which are a natural generalization of the above dichotomy between vertical and horizontal isometric copies. The first step is to prove that no $\ell_\infty$-isometric copy of $\B$ in $\Snake^n$ has direction $n+1$, while during the second one we show that directions $1,\dots,n$ are also excluded. This would immediately finish the proof of Proposition~\ref{prop_main} (and thus the one of Theorem~\ref{th_main} as well), since Lemma~\ref{lem_baton_embed} ensures that each $\ell_\infty$-isometric copy of $\B$ in $\Snake^n \subset \R^{n+1}$ has at least one direction among the first $n+1$ positive integers.

\vspace{3mm}
{\noindent \bf Step 1:} {\it no $\ell_\infty$-isometric copy of $\B$ in $\Snake^n$ has direction $n+1$.}
\vspace{3mm}

Assume the contrary. Let the sequence $\x^0,\ldots,\x^k$ of points in $\Snake^n$ be an $\ell_\infty$-isometric copy of $\B$ that has direction $n+1$. Thus, their last coordinates $x^0_{n+1}, \dots, x^k_{n+1}$ form (in that order) either a translation, or a reflection of $\B$. Without loss of generality, assume that the former holds, i.e., that we have $x^s_{n+1}-x^{s-1}_{n+1}=\lambda_s \le 1$ for all $s = 1,\dots,k.$

Choose $m \in \N$ such that $ma_n \le x^0_{n+1}<(m+1)a_n$. A priori, there are two possibilities: either $x^k_{n+1} < (m+1)a_n+1$, or $x^k_{n+1} \ge (m+1)a_n+1$.

The former option leads to a contradiction since
\begin{equation*}
	\sum_{t=1}^k\lambda_t = x^k_{n+1}-x^0_{n+1} < (m+1)a_n+1-ma_n=a_n+1.
\end{equation*}

So, let us assume that $x^k_{n+1} \ge (m+1)a_n+1$. In this case, put
\begin{equation*}
	l \coloneqq \max\{s:x^s_{n+1}<(m+1)a_n\} \ \mbox{ and } \ r \coloneqq \min\{s:x^s_{n+1}\ge(m+1)a_n+1\}.
\end{equation*}
Now we estimate the distance $\|\x^r-\x^l\|_\infty$ in two ways to get a contradiction again.

On the one hand, it is clear that
\begin{equation*}
	(m+1)a_n \le x^{l+1}_{n+1} = x^l_{n+1}+\lambda_{l+1} \le x^l_{n+1}+1 < (m+1)a_n+1.
\end{equation*}
Similarly, we have
\begin{equation*}
	(m+1)a_n \le x^{r}_{n+1} - 1 \le x^{r}_{n+1}-\lambda_r = x^{r-1}_{n+1} < (m+1)a_n+1.
\end{equation*}
Thus,
\begin{align} \label{eq_step1_1}
	\|\x^r-\x^l\|_\infty =&\ x^r_{n+1}-x^l_{n+1} = (x^r_{n+1}-x^{r-1}_{n+1})+ (x^{r-1}_{n+1}-x^{l+1}_{n+1})+ (x^{l+1}_{n+1}-x^l_{n+1}) \nonumber \\
	=&\ \lambda_r+(x^{r-1}_{n+1}-x^{l+1}_{n+1})+\lambda_{l+1} <1+1+1=3.
\end{align}

On the other hand, it is clear that $\|\x^r-\x^l\|_\infty \ge \|\y^r-\y^l\|_\infty$, where $\y^s = (x^s_1,\dots,x^s_n)$ is a set of the first $n$ coordinates of $\x^s$, $s = 0,\dots,k$. Besides, we have $\y^l \in \Snake^n(x^l_{n+1})$ by definition, see~\eqref{eq_Snake_cont}. Recall that
\begin{equation*}
	(m+1)a_n > x^l_{n+1} = x^{l+1}_{n+1}-\lambda_{l+1} \ge (m+1)a_n-1\ge ma_n+1,
\end{equation*}
where the last inequality holds because $a_n \ge 2$, see~\eqref{eq_th_main_param}. Therefore, the second part of Lemma~\ref{lem_contour} implies that
\begin{equation*}
	\Snake^n(x^l_{n+1}) = \snake^{n-1}_n-mb_n\cdot\one_{n}+[0,1)\cdot\one_{n}.
\end{equation*}
Similarly, we have 
\begin{equation*}
	(m+1)a_n+1 \le x^r_{n+1} = x^{r-1}_{n+1}+\lambda_r < (m+1)a_n+1+1 \le (m+2)a_n.
\end{equation*}
Thus, by the second part of Lemma~\ref{lem_contour}, we have
\begin{equation*}
	\y^r \in \Snake^n(x^r_{n+1}) = \snake^{n-1}_n-(m+1)b_n\cdot\one_{n}+[0,1)\cdot\one_{n}.
\end{equation*}
Finally, let us apply Lemma~\ref{lem_dist} and conclude that the $\ell_\infty$-distance between $\Snake^n(x^l_{n+1})$ and $\Snake^n(x^r_{n+1})$ is equal to $b_n-1 = 3$, see~\eqref{eq_th_main_param}. In particular,  we have
\begin{equation*}
	\|\x^r-\x^l\|_\infty \ge \|\y^r-\y^l\|_\infty > 3.
\end{equation*}
However, the last inequality contradicts~\eqref{eq_step1_1}. This observation completes Step~1.

\vspace{3mm}
{\noindent \bf Step 2:} {\it for all $1\le i \le n$, no $\ell_\infty$-isometric copy of $\B$ in $\Snake^n$ has direction $i$.}
\vspace{3mm}

This proof shares similarities with the argument from the previous step. As earlier, assume the contrary. Let the sequence $\x^0,\ldots,\x^k$ of points in $\Snake^n$ be an $\ell_\infty$-isometric copy of $\B$ that has direction $i$ for some $1 \le i \le n$. Since their last coordinates do not determine the distances between these points (see Lemma~\ref{lem_baton_embed}), it is easy to see that the sequence $\y^0,\ldots,\y^k$ is also an $\ell_\infty$-isometric copy of $\B$ that has direction $i$, where $\y^s = (x^s_1,\dots,x^s_n)$, $s=0,\dots,k$.

Pick an index $l$ such that $x^l_{n+1} = \min_s x^s_{n+1}$. If this choice is not uniquely determined, we pick an arbitrary of them. Next, let us choose $m \in \N$ such that $(m-1)a_n+1 \le x^l_{n+1}<ma_n+1$. A priori, there are two possibilities: either $\max_s x^s_{n+1} < (m+1)a_n$, or $\max_s x^s_{n+1} \ge (m+1)a_n$.

In the former case, given $s=0,\dots,k$, we have $(m-1)a_n+1 \le x^s_{n+1} < (m+1)a_n$. Thus, Corollary~\ref{cor} implies that
\begin{equation} \label{eq_step2}
	\y^s \in \Snake^n(x^s_{n+1}) \subset \Snake^n(ma_n) = \snake^{n-1}_n-mb_n\cdot\one_{n}+[0,b_n+1)\cdot\one_{n}.
\end{equation}
This proves the following statement.

\begin{Claim} \label{cl1}
	The set $\{\y^0,\ldots,\y^k\}+mb_n\cdot\one_{n}$ is a subset of $\snake^{n-1}_n+[0,b_n+1)\cdot\one_{n}$ and forms an $\ell_\infty$-isometric copy of $\B$.
\end{Claim}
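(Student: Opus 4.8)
\textbf{Proof proposal for Claim~\ref{cl1}.}

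The plan is to treat the claim as a near-immediate consequence of the inclusion~\eqref{eq_step2} that was just established. First I would record the set-theoretic reformulation: from~\eqref{eq_step2} we have $\y^s \in \snake^{n-1}_n - mb_n\cdot\one_n + [0,b_n+1)\cdot\one_n$ for every $s = 0,\dots,k$, which upon translating all points by the fixed vector $mb_n\cdot\one_n$ says exactly that $\y^s + mb_n\cdot\one_n \in \snake^{n-1}_n + [0,b_n+1)\cdot\one_n$ for every $s$. Hence $\{\y^0,\ldots,\y^k\} + mb_n\cdot\one_n \subseteq \snake^{n-1}_n + [0,b_n+1)\cdot\one_n$, which is the first assertion of the claim.

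For the second assertion, I would simply observe that translation by a fixed vector is an isometry of $(\R^n,\|\cdot\|_\infty)$: for any $s,s'$ we have $\|(\y^s + mb_n\cdot\one_n) - (\y^{s'} + mb_n\cdot\one_n)\|_\infty = \|\y^s - \y^{s'}\|_\infty$. So the translated sequence realizes exactly the same pairwise $\ell_\infty$-distances as $\y^0,\ldots,\y^k$. Since the latter sequence was already argued (at the start of Step~2, using Lemma~\ref{lem_baton_embed} and the fact that the last coordinate does not affect the distances) to be an $\ell_\infty$-isometric copy of $\B$ in that order, the translated sequence $\y^0 + mb_n\cdot\one_n, \ldots, \y^k + mb_n\cdot\one_n$ is also an $\ell_\infty$-isometric copy of $\B$ in the same order. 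Combining the two assertions finishes the claim.

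There is essentially no obstacle here: the claim is just bookkeeping, packaging the inclusion~\eqref{eq_step2} together with translation-invariance of the metric into a form that matches the hypotheses needed to invoke the induction hypothesis of Proposition~\ref{prop_main} on the lower-dimensional hypersurface $\snake^{n-1}_n$. The only point worth stating carefully is why $\y^0,\ldots,\y^k$ is itself an isometric copy of $\B$ — but that was handled already in the opening lines of Step~2 via Lemma~\ref{lem_baton_embed}, so it can be cited rather than reproved. The real work of Step~2 (showing this configuration inside $\snake^{n-1}_n + [0,b_n+1)\cdot\one_n$ cannot exist, using $b_n + 1 = b_{n-1}$ type relations among the parameters~\eqref{eq_th_main_param} and the induction hypothesis) comes after the claim, not within it.
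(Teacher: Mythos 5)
Your proposal is correct and matches the paper's approach, which presents Claim~\ref{cl1} as an immediate consequence of the inclusion~\eqref{eq_step2} together with the observation (made at the start of Step~2 via Lemma~\ref{lem_baton_embed}) that $\y^0,\ldots,\y^k$ is itself an $\ell_\infty$-isometric copy of $\B$; the translation by $mb_n\cdot\one_n$ is indeed just bookkeeping. (One small aside: the subsequent ``real work'' uses the scaling identity $\snake^{n-1}_n+[0,b_n+1)\cdot\one_n = 5\cdot\Snake^{n-1}$ via Lemma~\ref{lem_homot}, rather than a relation of the form $b_n+1=b_{n-1}$, but this does not affect the proof of the claim itself.)
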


On the other hand, recall that $b_n=b_n(n)=4$ due to~\eqref{eq_th_main_param}. Besides, we have $\frac{a_i(n)}{5} = a_i(n-1)$ and $\frac{b_i(n)}{5} = b_i(n-1)$ for all $i=1,\dots,n-1$. Hence, it follows from Lemma~\ref{lem_homot} that
\begin{align*}
	\snake^{n-1}_n+[0,b_n+1)\cdot\one_{n} =&\ \snake^{n-1}\big(a_1(n),b_1(n),\dots,a_{n-1}(n),b_{n-1}(n)\big)+[0,5)\cdot\one_{n} \\
	=&\ 5\cdot\bigg(\snake^{n-1}\Big(\frac{a_1(n)}{5},\frac{b_1(n)}{5}, \dots,\frac{a_{n-1}(n)}{5},\frac{b_{n-1}(n)}{5}\Big)+[0,1)\cdot\one_{n}\bigg) \\
	=&\ 5\cdot\Big(\snake^{n-1}\big(a_1(n-1),b_1(n-1),\dots, a_{n-1}(n-1),b_{n-1}(n-1)\big)+[0,1)\cdot\one_{n}\Big) \\
	=&\ 5\cdot\big(\snake^{n-1}_{n-1}+[0,1)\cdot\one_{n}\big) = 5\cdot\Snake^{n-1}.
\end{align*}
Now we apply the induction hypothesis of Proposition~\ref{prop_main} to $\Snake^{n-1}$ and conclude the following.

\begin{Claim} \label{cl2}
	For all positive reals $\lambda_1',\dots,\lambda_{k'}'$, if $\max_t\lambda_t' \le 5$ and $\sum_{t=1}^{k'}\lambda_t' \ge 5a_{n-1}(n-1)+5$, then $\snake^{n-1}_n+[0,b_n+1)$ contains no $\ell_\infty$-isometric copy of $\B(\lambda_1',\dots,\lambda_{k'}')$.
\end{Claim}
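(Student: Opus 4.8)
Claim~\ref{cl2} is just the induction hypothesis of Proposition~\ref{prop_main} applied at dimension $n-1$, rescaled by the factor $5$ via Lemma~\ref{lem_homot}. So the proof is essentially a two-line verification: take the hypothesis of Proposition~\ref{prop_main} for $n-1$, namely that $\Snake^{n-1}$ contains no $\ell_\infty$-isometric copy of $\B(\mu_1,\dots,\mu_{k'})$ whenever $\max_t\mu_t\le 1$ and $\sum_t\mu_t\ge a_{n-1}(n-1)+1$, and scale everything by $5$.

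\begin{proof}[Proof of Claim~\ref{cl2}]
	By the induction hypothesis of Proposition~\ref{prop_main}, applied with $n-1$ in place of $n$, the hypersurface $\Snake^{n-1} = \Snake^{n-1}\big(a_1(n-1),b_1(n-1),\dots,a_{n-1}(n-1),b_{n-1}(n-1)\big)$ contains no $\ell_\infty$-isometric copy of $\B(\mu_1,\dots,\mu_{k'})$ provided that $\max_t \mu_t \le 1$ and $\sum_{t=1}^{k'}\mu_t \ge a_{n-1}(n-1)+1$. Now suppose $\lambda_1',\dots,\lambda_{k'}'$ are positive reals with $\max_t \lambda_t' \le 5$ and $\sum_{t=1}^{k'}\lambda_t' \ge 5a_{n-1}(n-1)+5$, and assume for contradiction that $\snake^{n-1}_n+[0,b_n+1)\cdot\one_n$ contains an $\ell_\infty$-isometric copy of $\B(\lambda_1',\dots,\lambda_{k'}')$. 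As shown just above, $\snake^{n-1}_n+[0,b_n+1)\cdot\one_n = 5\cdot\Snake^{n-1}$. Dividing this isometric copy by $5$ and using that scaling by $1/5$ multiplies all $\ell_\infty$-distances by $1/5$, we obtain an $\ell_\infty$-isometric copy of $\B(\lambda_1'/5,\dots,\lambda_{k'}'/5)$ inside $\Snake^{n-1}$. But $\max_t(\lambda_t'/5) \le 1$ and $\sum_{t=1}^{k'}(\lambda_t'/5) \ge a_{n-1}(n-1)+1$, so this contradicts the induction hypothesis. Hence no such copy exists in $\snake^{n-1}_n+[0,b_n+1)\cdot\one_n$, which is the assertion of the claim.
\end{proof}
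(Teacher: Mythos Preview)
Your proof is correct and follows exactly the paper's approach: the paper first establishes the identity $\snake^{n-1}_n+[0,b_n+1)\cdot\one_n = 5\cdot\Snake^{n-1}$ via Lemma~\ref{lem_homot} and the relations $a_i(n)/5=a_i(n-1)$, $b_i(n)/5=b_i(n-1)$, and then simply states that Claim~\ref{cl2} follows by applying the induction hypothesis of Proposition~\ref{prop_main} to $\Snake^{n-1}$. You have spelled out the rescaling argument in slightly more detail than the paper, but the logic is identical.
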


In particular, the last statement implies that $\snake^{n-1}_n+[0,b_n+1)$ contains no $\ell_\infty$-isometric copy of $\B = \B(\lambda_1,\dots,\lambda_{k})$ because
\begin{equation*}
	\sum_{t=1}^k\lambda_t \ge a_n(n)+1 = \frac{7}{4}\cdot5^{n}-\frac{3}{4}> \frac{7}{4}\cdot5^{n}-\frac{15}{4} = 5a_{n-1}(n-1)+5.
\end{equation*}
However, this observation contradicts Claim~\ref{cl1}.

So, it remains only to consider the case when $x^s_{n+1} \ge (m+1)a_n$ for some $s$. Let $r$ be the closest to $l$ index such that $x^{r+1}_{n+1} \ge (m+1)a_n$. Without loss of generality, assume that $r>l$. It is clear the set $\{\x_s: l \le s \le r\}$ is an $\ell_\infty$-isometric copy of $\B' \coloneqq \B(\lambda_l,\dots,\lambda_{r-1})$. Thus, so is the set $\{\y_s: l \le s \le r\}$. Moreover, by the choice of $r$, we have $x^s_{n+1} < (m+1)a_n$ for all $l \le s \le r$. As earlier, these inequalities together with Corollary~\ref{cor} imply~\eqref{eq_step2} and prove the following statement.

\begin{Claim} \label{cl3}
	The set $\{\y_s: l \le s \le r\}+mb_n\cdot\one_{n}$ is a subset of $\snake^{n-1}_n+[0,b_n+1)\cdot\one_{n}$ and forms an $\ell_\infty$-isometric copy of $\B' = \B(\lambda_l,\dots,\lambda_{r-1})$.
\end{Claim}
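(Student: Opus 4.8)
The plan is to obtain Claim~\ref{cl3} in exactly the way~\eqref{eq_step2} was obtained just before Claim~\ref{cl1}: the inclusion comes from Corollary~\ref{cor}, and the isometry part from Lemma~\ref{lem_baton_embed} together with the already-recorded fact that a block of consecutive points of an $\ell_\infty$-isometric copy of a baton is an $\ell_\infty$-isometric copy of a sub-baton.

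For the isometry statement, I would start from the observation (already made above) that $\{\x^s : l\le s\le r\}$ is an $\ell_\infty$-isometric copy of $\B'$, necessarily with direction $i\le n$ inherited from the full sequence. By Lemma~\ref{lem_baton_embed}, the $i$-th coordinates of these points already realize every pairwise $\ell_\infty$-distance, while no other coordinate ever exceeds the corresponding value; hence erasing the last coordinate changes no distance, and $\{\y^s : l\le s\le r\}$ is still an $\ell_\infty$-isometric copy of $\B'$. Translating by the fixed vector $mb_n\cdot\one_n$ is an isometry, so $\{\y^s : l\le s\le r\}+mb_n\cdot\one_n$ is an $\ell_\infty$-isometric copy of $\B'$ as well.

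For the inclusion, I would repeat the computation preceding~\eqref{eq_step2}. By the choice of $m$ we have $x^l_{n+1}\ge(m-1)a_n+1$, and since $l$ minimizes the last coordinate, $x^s_{n+1}\ge(m-1)a_n+1$ for every $s$; by the choice of $r$ we have $x^s_{n+1}<(m+1)a_n$ for all $l\le s\le r$. Thus each such $x^s_{n+1}$ lies in $[(m-1)a_n+1,(m+1)a_n)=[ma_n-a_n+1,ma_n+a_n)$, which is precisely the window in which Corollary~\ref{cor} applies, giving $\y^s\in\Snake^n(x^s_{n+1})\subset\Snake^n(ma_n)=\snake^{n-1}_n-mb_n\cdot\one_n+[0,b_n+1)\cdot\one_n$. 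Adding $mb_n\cdot\one_n$ to each point yields the asserted inclusion into $\snake^{n-1}_n+[0,b_n+1)\cdot\one_n$, and the claim follows.

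I do not expect a genuine obstacle inside Claim~\ref{cl3}: it is a packaging step that records the block in the exact shape needed to apply the induction hypothesis of Proposition~\ref{prop_main} to $\Snake^{n-1}$, after rescaling $\snake^{n-1}_n+[0,b_n+1)\cdot\one_n$ to $5\cdot\Snake^{n-1}$ via Lemma~\ref{lem_homot} (this rescaling was already carried out before Claim~\ref{cl2}). The only point to watch is that all the last coordinates of the block genuinely land in the window of Corollary~\ref{cor}, which uses nothing beyond $a_n>1$ from~\eqref{eq_th_main_param}. The real remaining work comes after the claim: one must bound the total length of $\B'$ from below, using that the last coordinate jumps from $x^l_{n+1}<ma_n+1$ to $x^{r+1}_{n+1}\ge(m+1)a_n$ --- hence by more than $a_n-1$ --- and that each $\lambda_s$ dominates $|x^s_{n+1}-x^{s-1}_{n+1}|$; a short check with~\eqref{eq_th_main_param} then shows this length exceeds the threshold of Claim~\ref{cl2}, producing a contradiction and closing the case exactly as Claims~\ref{cl1} and~\ref{cl2} did in the first case.
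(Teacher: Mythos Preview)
Your proposal is correct and follows essentially the same route as the paper: the paper likewise argues that $\{\x^s:l\le s\le r\}$ is an $\ell_\infty$-copy of $\B'$, that dropping the last coordinate preserves this (since direction $i\le n$), and that the bounds $(m-1)a_n+1\le x^s_{n+1}<(m+1)a_n$ let Corollary~\ref{cor} yield~\eqref{eq_step2} for each such $s$. If anything, your write-up is slightly more explicit than the paper's, which simply says ``As earlier, these inequalities together with Corollary~\ref{cor} imply~\eqref{eq_step2} and prove the following statement.''
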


Finally, we estimate the diameter of $\B'$ to get a contradiction with Claim~\ref{cl2}. Indeed,
\begin{align*}
	\sum_{t=l}^{r-1}\lambda_t =&\ \sum_{t=l}^{r}\lambda_t - \lambda_{r} =  \|\x^{r+1}-\x^l\|_\infty - \lambda_{r} \ge x^{r+1}_{n+1}-x^l_{n+1}-1 \\
	>&\ (m+1)a_n(n) - \big(ma_n(n)+1\big) - 1 = a_n(n)-2 = 5a_{n-1}(n-1)+5,
\end{align*}
where the final equality is via a straightforward calculation. This observation completes the proofs of both Proposition~\ref{prop_main} and Theorem~\ref{th_main}.

\section{Corollaries} \label{sec_cor}

\subsection{Proof of Corollary~\ref{cor1}}

Given $n \in \N$, let $N$ be a norm on $\R^n$. A well-known result states that any two norms on a finite-dimensional space are {\it equivalent}, i.e., each one is bounded by some linear function of another. In particular, there are positive reals $c$ and $C$ such that $c\|\x\|_N \le \|\x\|_\infty \le C\|\x\|_N$ for all $\x \in \R^n$. After a proper scaling, we may assume without loss of generality that $C=1$. We prove that the two-coloring of $\R^n$ from Theorem~\ref{th_main} has the desired property with $\delta=\delta(\R_N^n)\coloneqq 5^n/c$.

Indeed, let $\B=\B(\lambda_1,\dots, \lambda_k)$ be a baton such that $\max_t\lambda_t \le 1$ and $\sum_{t=1}^k\lambda_t \ge \delta$. Fix a sequence of collinear points $\x^0, \dots , \x^k$ in $\R^n$ that forms an $N$-isometric copy of $\B$. Consider a ratio
\begin{equation*}
	\mu \coloneqq \frac{\|\x^r-\x^l\|_\infty}{\|\x^r-\x^l\|_N}.
\end{equation*}
In is not hard to see that the collinearity of these points implies that the value of $\mu$ does not depend on the indexes $l$ and $r$ such that $0\le l < r \le k$. In particular, we conclude that the sequence $\x^0, \dots , \x^k$ forms an $\ell_\infty$-isometric copy of the baton $\B' = \B(\lambda_1',\dots, \lambda_k')$, where 
$\lambda_s'=\mu \lambda_s$ for all $s=0,\dots,k$. Moreover, we have $c \le \mu \le 1$ by construction.

Finally, observe that $\max_t\lambda_t' = \mu \cdot \max_t\lambda_t \le 1$ and $\sum_{t=1}^k\lambda_t' = \mu \cdot \sum_{t=1}^k\lambda_t \ge c \cdot \delta = 5^n$. Hence, Theorem~\ref{th_main} implies that the sequence $\x^0, \dots , \x^k$ contains points of both colors.

\subsection{Proof of Corollary~\ref{cor2}}

Let $\R_N^n$ be an $n$-dimensional normed space whose unit ball $U$ is a centrally symmetric convex polytope in $\R^n$ with $2f$ facets. The following proof is based on the well-known fact that $U$ can be considered as an intersection of an $f$-dimensional hypercube with some $n$-dimensional hyperplane that contains the origin (see \cite{Grun}, Theorem~5.1.3 or \cite{Klee}, Proposition~4.5). However, we write down all the details below for clarity.

Given $1\le i \le f$, let $\mathbf{c}_i \in \R^n$ be a vector orthogonal to the $i$-th pair of the opposite facets of $U$ such that their hyperplains are defined by the equations $\langle\mathbf{c}_i, \x \rangle =1$ and $\langle \mathbf{c}_i, \x \rangle =-1$, where $\langle\cdot,\cdot\rangle$ stands for the standard Euclidean dot product on $\R^n$. Then it is not hard to see that
\begin{equation*}
	U = \{\x\in\R^n : |\langle \mathbf{c}_i, \x \rangle|\le 1 \mbox{ for all } 1\le i \le f\},
\end{equation*}
and that for all $\x \in \R^n$, we have
\begin{equation} \label{eq_poly}
	\|\x\|_N = \max_{1\le i \le f} |\langle \mathbf{c}_i, \x \rangle|.
\end{equation}

Consider a linear function $\varphi: \R^n \rightarrow \R^f$ defined for all $\x \in \R^n$ by
\begin{equation*}
	\varphi(\x) = \big(\langle \mathbf{c}_1, \x \rangle, \dots , \langle \mathbf{c}_f, \x \rangle\big).
\end{equation*}
It follows from~\eqref{eq_poly} that $\|\x\|_N = \|\varphi(\x)\|_\infty$ for all $\x \in \R^n$. Thus, $\varphi$ provides an isometric embedding of $\R_N^n$ into $\R_\infty^f$. Finally, it is easy to see that the two-coloring of $\R^f$ from Theorem~\ref{th_main} induced on the image $\varphi(\R^n)$ possesses the desired property.

\section{Concluding remarks} \label{sec_concl}

The statement of Theorem~\ref{th_main} raises the following problem. Given $n \in \N$, what is the minimum $k=k(\R_\infty^n) \le 5^n$ such that $\chi(\R_\infty^n,\B_k)=2$? With a more careful choice of the auxiliary parameters for the snake coloring, we can show that $k(\R_\infty^n) = O(3^n)$. However, this approach does not seem to lead to any subexponential upper bound.  On the other hand, it follows from~\eqref{eq_KS} that $k(\R_\infty^n)\ge n/\ln(2)$. It would be interesting to reduce the gap between these bounds and find the correct asymptotic.

The cases of small dimensions are usually of independent interest regarding the problems of this flavor. It is not hard to check that $\chi(\R_\infty^2,\B_1)=4$, $\chi(\R_\infty^2,\B_2)=\chi(\R_\infty^2,\B_3)=3$ and $\chi(\R_\infty^2,\B_k)=2$ for all $k \ge 4$. However, finding such a complete list seems to be a computationally challenging problem even in case of three-dimensional space $\R_\infty^3$.

Following the proof of Corollary~\ref{cor1}, one can easily show that the minimum $k=k(\R_p^n)$ such that $\chi(\R_p^n,\B_k)=2$ does not exceed $n\cdot5^n$ for all $1<p<\infty$. However, it is natural to conjecture that $k(\R_p^n)$ is bounded for such values of $p$ as $n \to \infty$ by analogy with the Euclidean case, where we have $k(\R_2^n) \le 5$ for all $n \in \N$. 

The situation with the Manhattan distance is also obscure for us. We know the way to establish an almost linear lower bound $k(\R_1^n) \ge n^{1-o(1)}$, but the best upper bound we have is only doubly exponential, see Corollary~\ref{cor2}.

Furthermore, note that the upper bound on $k(\R_N^n)$ from Corollary~\ref{cor2} depends only on the number of facets of the unit ball, and is independent of the dimension $n$. For instance, if $P(2f)$ is a norm on the plane whose unit ball is a regular $(2f)$-gon, then Corollary~\ref{cor2} implies that $k\big(\R_{P(2f)}^2\big) \le 5^f$. Is there a uniform upper bound independent of $f$ for this sequence of norms?

Finally, one can also consider the multicolor versions of all the aforementioned problems, where each isometric copy of $\B_k$ is required to intersect not with two, but with a greater number of colors.

{\small }


\begin{thebibliography}{20}

\bibitem{ArTsat} A. Arman, S. Tsaturian, {\it A result in asymmetric Euclidean Ramsey theory}, Discrete Math., 341 (2018), N5, 1502--1508.

\bibitem{ArTsat2} A. Arman, S. Tsaturian, {\it Equally spaced collinear points in Euclidean Ramsey theory}, arXiv preprint 1705.04640, 2017.

\bibitem{Cant} K. Cantwell, {\it All regular polytopes are Ramsey}, J. Combin. Theory Ser. A, 114 (2007), 555--562.

\bibitem{ConFox} D. Conlon, J. Fox, {\it Lines in Euclidean Ramsey theory}, Disc. Comput. Geom., 61 (2019), N1, 218--225.
	
\bibitem{degrey} A.D.N.J. de Grey, {\it The chromatic number of the plane is at least $5$}, Geombinatorics, 28 (2018), 18--31.

\bibitem{EGMRSS1} P. Erd\H{o}s, R.L. Graham, P. Montgomery, B.L. Rothschild, J. Spencer, E.G. Straus, {\it Euclidean Ramsey theorems I}, J. Combin. Theory Ser. A, 14 (1973), N3,  341--363.

\bibitem{EGMRSS2} P. Erd\H{o}s, R.L. Graham, P. Montgomery, B.L. Rothschild, J. Spencer, E.G. Straus, {\it Euclidean Ramsey theorems II}, Colloq. Math. Soc. J. Bolyai, 10 (1973), Infinite and Finite Sets, Keszthely, Hungary and North-Holland, Amsterdam, 520--557.

\bibitem{EGMRSS3} P. Erd\H{o}s, R.L. Graham, P. Montgomery, B.L. Rothschild, J. Spencer, E.G. Straus, {\it Euclidean Ramsey theorems III}, Colloq. Math. Soc. J. Bolyai, 10 (1973), Infinite and Finite Sets, Keszthely, Hungary and North-Holland, Amsterdam, 559--583.

\bibitem{Exoo} G. Exoo, D. Ismailescu, {\it The chromatic number of the plane is at least $5$: A new proof}, Disc. Comput. Geom., 64 (2020), N1, 216--226.

\bibitem{FKS} N. Frankl, A. Kupavskii, A. Sagdeev, {\it Max-norm Ramsey Theory}, arXiv preprint 2111.08949, 2021.

\bibitem{FKS2} N. Frankl, A. Kupavskii, A. Sagdeev, {\it Infinite sets can be Ramsey in the Chebyshev metric}, Russian Math. Surveys, 77 (2022), N3, 175--176.

\bibitem{FrRod} P. Frankl, V. R\"odl, {\it A partition property of simplices in Euclidean space}, J. Amer. Math. Soc., 3 (1990), N1, 1--7.

\bibitem{Graham2017} R.L. Graham, {\it Euclidean Ramsey theory}, Handbook of Discrete and Computational Geometry, Chapman and Hall/CRC, 2017, 281--297.

\bibitem{Grun} B. Gr\"unbaum, {\it Convex Polytopes}, 2nd edition, New York: Interscience, 1967.

\bibitem{Klee} V. Klee, {\it Polyhedral sections of convex bodies}, Acta Math., 103 (1960), NN3--4, 243--267.

\bibitem{Kriz1} I. K\v{r}\'\i\v{z}, {\it Permutation groups in euclidean Ramsey theory}, Proc. Amer. Math. Soc., 112 (1991), N3, 899--907.

\bibitem{Kup} A. Kupavskiy, {\it On the chromatic number of $R^n$ with an arbitrary norm}, Discrete Math.,
311 (2011), N6, 437--440.

\bibitem{KupSag_RMS} A. Kupavskii, A. Sagdeev, {\it Ramsey theory in the $n$-space with Chebyshev metric}, Russian Math. Surveys, 75 (2020), N5, 965--967.

\bibitem{KupSag} A. Kupavskii, A. Sagdeev, {\it All finite sets are Ramsey in the maximum norm}, Forum Math. Sigma, 9 (2021), e55, 12 pp.

\bibitem{Lar} D.G. Larman, C.A. Rogers, {\it The realization of distances within sets in Euclidean space}, Mathematika, 19 (1972), 1--24.

\bibitem{Leader} I. Leader, P.A. Russell, M. Walters, {\it Transitive sets in Euclidean Ramsey theory}, J. Combin. Theory Ser. A, 119 (2012), 382--396.

\bibitem{Naslund} E. Naslund, {\it Monochromatic equilateral triangles in the unit distance graph}, Bull. Lond. Math. Soc., 52 (2020), N4, 687--692.

\bibitem{Pros2018_ExpRams} R.I. Prosanov, {\it Upper bounds for the chromatic numbers of Euclidean spaces with forbidden Ramsey sets}, Math. Notes, 103 (2018), N2, 243--250.

\bibitem{Pros2020_LR} R. Prosanov, {\it A new proof of the Larman–Rogers upper bound for the chromatic number of the Euclidean space}, Discrete Appl. Math., 276 (2020), 115--120.

\bibitem{Rai3} A.M Raigorodskii, {\it On the chromatic number of a space}, Russian Math. Surveys, 55 (2000), 351--352.

\bibitem{Rai4} A.M. Raigorodskii, {\it On the chromatic number of a space with the metric $\ell_p$}, Russian Math. Surveys, 59 (2004), 973--975.

\bibitem{RaiSur1} A.M. Raigorodskii, {\it The Borsuk problem and the chromatic numbers of some metric spaces}, Russian Math. Surveys, 56 (2001), 103--139.

\bibitem{RaiSur2} A.M. Raigorodskii, {\it Coloring distance graphs and graphs of diameters}, Thirty Essays on Geometric Graph Theory, New York, Springer, 2013, 429--460.

\bibitem{Sag2018_CartProd} A.A. Sagdeev, {\it Exponentially Ramsey sets}, Probl. Inf. Transm., 54 (2018), N4, 372--396.

\bibitem{Soifer} A. Soifer, {\it The mathematical coloring book}, Springer-Verlag New York, 2009.

\end{thebibliography}
\end{document}